\documentclass[12pt]{amsart}
\usepackage[all]{xy}
\usepackage{amssymb}
\usepackage{parskip}
\usepackage{tikz,pgf}
\usepackage{mathtools}
\usepackage{booktabs}
\usetikzlibrary{calc, matrix, positioning, backgrounds}
%\usetikzlibrary{external}
%\tikzexternalize % activate!

\usepackage{color}
\def\red{}
\def\blue{}

\newtheorem{theorem}{Theorem}[section]

\newtheorem{lemma}[theorem]{Lemma}
\newtheorem{corollary}[theorem]{Corollary}

\theoremstyle{definition}
\newtheorem{definition}[theorem]{Definition}
\newtheorem{example}[theorem]{Example}
 
\theoremstyle{remark}
\newtheorem{remark}[theorem]{Remark}

\usepackage{fourier}
\usepackage[T1]{fontenc}
\usepackage[utf8]{inputenc}
\title[On L-S category and topological complexity of \MakeUppercase{\protect\mdkn}]{On Lusternik-Schnirelmann category and topological complexity of no $k$-equal manifolds %\MakeUppercase{\protect\mdkn}
}
\thanks{The second author was supported by a Conacyt scholarship.}
\author{Jes\'us Gonz\'alez \ \ and \ \ Jos\'e Luis Le\'on-Medina}

\newcommand{\mybin}[2]{\ensuremath{\scriptsize\Big(\begin{array}{@{}c@{}} #1\\[-4pt] #2\end{array}\Big)}}
\newcommand{\floor}[1]{\left\lfloor #1 \right\rfloor}
\newcommand{\nn}{\ensuremath{\mathbf{n}}}
\newcommand{\mdkn}{\ensuremath{M_d^{(k)}(n)}}

\newcommand{\bigast}{\mathop{\raisebox{-0.6ex}{\scalebox{1.5}{$\ast$}}}}
\newcommand{\mfloor}[1]{\ensuremath{\left\lceil #1-1\right\rceil}}

\def\TC{\protect\operatorname{TC}}
\def\cat{\protect\operatorname{cat}}

\usepackage{url}
\urlstyle{same}

\begin{document}
\maketitle

\begin{abstract}
We compute the Lusternik-Schnirelmann category and the topological complexity of no $k$-equal manifolds $\mdkn$ for certain values of $d$, $k$ and $n$. This includes instances where $\mdkn$ is known to be rationally non-formal. The key ingredient in our computations is the knowledge of the cohomology ring $H^*(\mdkn)$ as described by Dobrinskaya and Turchin in \cite{dobri2015}. A fine tuning comes from the use of obstruction theory techniques.
\end{abstract}

{\small 2010 Mathematics Subject Classification: 55R80, 55S35, 55S40, 55M30, 68T40.}

{\small Keywords and phrases: Higher topological complexity, diagonal subspace arrangements, no-$k$-equal manifolds, obstruction theory.}

\section{Introduction and main results}\label{seccionintroduccion}

The no $k$-equal manifold $\mdkn$ is defined as the complement in $\big(\mathbb{R}^d\big)^n$ of the diagonal-subspace arrangement formed by the union of subspaces
\[
A_I = \big\lbrace(x_1,\dots,x_n)\in \big(\mathbb{R}^d\big)^n \,|\, x_{i_1} = \cdots = x_{i_k}\big\rbrace,
\]
where $I=\{i_1,\dots, i_k\}$ runs through all cardinality-$k$ subsets of the segment $\nn=\{1,2,\dots, n\}$. 
For $k=2$ this construction yields the classical and extensively studied configuration space of $n$ distinct ordered points in $\mathbb{R}^d$. In this paper we will only deal with the case $k\geq3$. Further, as $\mdkn=\big(\mathbb{R}^d\big)^n $ for $n<k$ and $M_d^{(k)}(k)\simeq\mathbb{S}^{dk-d-1}$, we will restrict our attention to the combinatorially more interesting case $k<n$. The aim of this paper is to extend the work in \cite{Gonzalez2019} where the Lusternik-Schnirelmann category ($\cat$), topological complexity ($\TC$) and sequential topological complexity ($\TC_s$) of $\mdkn$ is computed for $d=1$. Here we address the more subtle problem for $d\geq2$. We prove:

\begin{theorem}\label{omnibus}
For $d\geq2$, $n>k\geq3$ and $n-(k-1)\floor{\frac{n}{k}}\leq\frac{dk-2}{d-1}$, $$\TC_s(\mdkn)=s\floor{\frac{n}{k}}$$ for any $s\geq1$. Here we use $\TC_1$ and $\TC_2$ as substitutes for $\cat$ and $\TC$ respectively.
\end{theorem}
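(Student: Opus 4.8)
The plan is to prove the two inequalities $\TC_s(\mdkn)\ge s\floor{\frac nk}$ and $\TC_s(\mdkn)\le s\floor{\frac nk}$ separately, and to reduce the upper bound for every $s$ to the single computation $\cat(\mdkn)=\floor{\frac nk}$. Indeed, the product inequality $\cat(X\times Y)\le\cat(X)+\cat(Y)$ gives $\TC_s(\mdkn)\le\cat\big((\mdkn)^s\big)\le s\,\cat(\mdkn)$, so once $\cat(\mdkn)=\floor{\frac nk}$ is known, the inequality $\TC_s(\mdkn)\le s\floor{\frac nk}$ follows for all $s\ge1$ (the case $s=1$ being tautological under the stated convention $\TC_1=\cat$). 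Thus there remain only two real tasks: a cohomological lower bound, and the upper bound $\cat(\mdkn)\le\floor{\frac nk}$.

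For the lower bound I would use the estimate $\TC_s(X)\ge\operatorname{zcl}_s(X)$ by the $s$-th zero-divisor cup-length, together with the ring $H^*(\mdkn)$ of \cite{dobri2015}, in the spirit of the $d=1$ computation in \cite{Gonzalez2019}. Put $q=\floor{\frac nk}$ and $m=dk-d-1$. For each $k$-subset $I\subseteq\nn$ the forgetful map $\mdkn\to M_d^{(k)}(I)\simeq\mathbb S^m$ produces a class $\omega_I\in H^m(\mdkn)$; from \cite{dobri2015} one reads that $\omega_{I_1}\cdots\omega_{I_q}\ne0$ for pairwise disjoint $I_1,\dots,I_q$, whereas $\omega_I\omega_J=0$ whenever $I\cap J\ne\varnothing$ (the latter also by a dimension count inside $M_d^{(k)}(I\cup J)$). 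Choosing $s$ partitions $\mathcal P^{(1)},\dots,\mathcal P^{(s)}$ of a $kq$-subset of $\nn$ into $k$-blocks, arranged so that any block of $\mathcal P^{(i)}$ meets any block of $\mathcal P^{(i')}$ for $i\ne i'$, one forms the $sq$ zero-divisors $\omega_I^{(i)}-\omega_I^{(i+1)}$ with $I\in\mathcal P^{(i)}$ and coordinate labels taken mod $s$. The vanishing of all cross products $\omega_I\omega_J$ collapses the expansion of their product in $H^*\big((\mdkn)^s\big)$, leaving distinct monomials built from the linearly independent classes $\prod_{I\in\mathcal P^{(i)}}\omega_I$, which cannot cancel; hence $\operatorname{zcl}_s(\mdkn)\ge sq$ and $\TC_s(\mdkn)\ge sq$.

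For the upper bound the key observation is that the hypothesis $n-(k-1)q\le\frac{dk-2}{d-1}$ is equivalent to $\operatorname{hdim}(\mdkn)\le(q+1)m$: from \cite{dobri2015} one gets that $\mdkn$ is $(m-1)$-connected with free cohomology and that its homotopy dimension equals $qm+(d-1)\big(n-(k-1)q-1\big)$, and the hypothesis says exactly $(d-1)\big(n-(k-1)q-1\big)\le m$. When this is strict, $\operatorname{hdim}(\mdkn)<(q+1)m$ and the classical bound $\cat(X)\le\operatorname{hdim}(X)/(\operatorname{conn}(X)+1)$ already yields $\cat(\mdkn)\le q$, which with the cup-length lower bound $\cat(\mdkn)\ge q$ from $\omega_{I_1}\cdots\omega_{I_q}\ne0$ settles the generic case. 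The hard part is the boundary case $\operatorname{hdim}(\mdkn)=(q+1)m$, where the dimension-connectivity estimate only gives $\cat(\mdkn)\le q+1$; here I would run obstruction theory for a section of the $q$-th Ganea fibration of $\mdkn$ (equivalently, for compressing the diagonal $\mdkn\to(\mdkn)^{q+1}$ into the fat wedge). Since the fibre $F_q$ of that fibration is $\big((q+1)m-2\big)$-connected and $\mdkn$ has dimension $(q+1)m$, there is a single obstruction, lying in $H^{(q+1)m}\big(\mdkn;\,\pi_{(q+1)m-1}(F_q)\big)$, and everything comes down to showing it vanishes; in this degree the coefficient module is a Hurewicz image, a tensor power of $H_m(\mdkn)$, so one uses the explicit structure of $H^{(q+1)m}(\mdkn)$ supplied by \cite{dobri2015} to recognise the obstruction class as one that is forced to be zero. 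I expect this last vanishing to be the main obstacle, precisely because the cup-length of $\mdkn$ is only $q$, so the top cohomology is not generated by products and the obstruction is not visibly trivial; the product inequalities, the zero-divisor estimate, and the non-boundary case are comparatively routine.
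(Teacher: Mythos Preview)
Your overall architecture matches the paper's: zero-divisor cup-length for the lower bound, the connectivity/dimension estimate for the generic upper bound, and obstruction theory on the Ganea (equivalently, fiberwise join of the path) fibration for the boundary case, with everything reduced to $\cat(\mdkn)\le q$ via $\TC_s\le s\cat$. There are two gaps, one minor and one essential.

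The minor one concerns your lower-bound construction. The requirement that every block of $\mathcal P^{(i)}$ meet every block of $\mathcal P^{(i')}$ for all $i\ne i'$ is a strong combinatorial condition on the $s$ partitions of a $kq$-set; for instance when $k=q$ it forces the partitions to be pairwise ``orthogonal'' in the Latin-square sense, and such families need not exist for every $s$. Even when they do, your claim that the surviving tensor monomials are distinct and cannot cancel is not argued (and can fail, e.g.\ when $q=1$ all your partitions coincide). The paper sidesteps this by using just \emph{two} explicit families of elementary classes $x_{ik}$ and $x_{ik+1}$ (with a single twist $\widetilde x_{mk}$), placing them in the tensor slots via $z_{i,j}=x_{ik}^{(j)}+x_{ik}^{(1)}$ for $j\ge2$ together with one ``shifted'' factor in slot $2$; the expansion then collapses to a sum of two visibly independent basic $k$-forest tensors. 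This is more elementary than your scheme and avoids the combinatorial existence question.

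The essential gap is in the boundary case. You correctly isolate a single obstruction in $H^{(q+1)m}$ with coefficients in a tensor power of $\pi_m(\mdkn)$, but your last sentence shows the key idea is missing: you regard the fact that the cup-length of $\mdkn$ equals $q$ as an \emph{obstacle} (``the top cohomology is not generated by products''), whereas it is precisely the \emph{reason} the obstruction vanishes. The point the paper uses is that the primary obstruction for the $q$-th fiberwise join of the based-path fibration is the $(q{+}1)$-st cup-power of the primary obstruction $\eta\in H^{m}\bigl(\mdkn;\pi_{m-1}(\Omega\mdkn)\bigr)$ for the path fibration itself (the standard ``joins multiply primary obstructions'' fact, cf.\ \cite{Gonzalez2015}). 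Since $\pi_{m-1}(\Omega\mdkn)\cong\mathbb Z^{\binom{n}{k}}$ is free abelian, one writes $\eta=\sum_j\eta_j$ with each $\eta_j\in H^{m}(\mdkn;\mathbb Z)$; then $\eta^{q+1}$ is a sum of images, under coefficient maps, of ordinary $(q{+}1)$-fold cup-products $\eta_{j_1}\cdots\eta_{j_{q+1}}\in H^{(q+1)m}(\mdkn;\mathbb Z)$, and every such product is zero because the integral cup-length is $q$. So the single obstruction you isolated is forced to vanish for purely multiplicative reasons, with no need to analyse $H^{(q+1)m}(\mdkn)$ directly.
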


Note that the number $n-(k-1)\floor{\frac{n}{k}}$ in Theorem~\ref{omnibus} is the sum of the quotient and the remainder in the division of $n$ by $k$. Theorem~\ref{omnibus} is most difficult to prove when the latter sum agrees in fact with $\frac{dk-2}{d-1}$, for then the argument is based on techniques of obstruction theory.

A central piece of information in our proof arguments comes from Do\-brins\-kaya-Turchin's description in \cite[Section 6]{dobri2015} of the cohomology ring of $\mdkn$ in terms of certain {admissible} $k$-forests. We review their description in Section~\ref{secciondescripciondecohomologia} below.

It is known from~\cite{Miller2012} that not all manifolds $\mdkn$ are rationally formal when $d=2$. In fact, non-trivial Massey products holding in these spaces would seem to play a key role in the complete determination of their $\TC_s$-invariants. In this direction it is interesting to remark that the hypothesis
\begin{equation}\label{nuestracondicion}
n-(k-1)\floor{\frac{n}{k}}\leq 2k-2
\end{equation}
in Theorem~\ref{omnibus} (for $d=2$) is less restrictive than the inequality
\begin{equation}\label{condiciondeMiller}
 n+\floor{\frac{n}{k}}(k-2)<6k-9
\end{equation}
coming from~\cite{Miller2012} as a sufficient condition for the rational formality of $M^{(k)}_2(n)$. For instance, with $k=4$,~(\ref{condiciondeMiller}) amounts to $n\leq10$, while~(\ref{nuestracondicion}) holds for $n\leq 24$, except for $n\in\{19,22,23\}$. Thus, Theorem~\ref{omnibus} describes cat and TC invariants for manifolds that (as far as it is currently known) might fail to be rationally formal. For instance, while the work in \cite{Miller2012} shows that $M_2^{(3)}(n)$ is rationally formal if and only if $n\leq 6$, our calculations show that the equality TC$_s\left(M_2^{(3)}(n)\right) = s\floor{\frac{n}{k}}$ holds for $n\leq 12$ with the possible exception of $n=11$. 

\section{The cohomology ring $H^\ast(\mdkn)$}\label{secciondescripciondecohomologia}

All cohomology groups in the following sections are taken with coefficients in the ring $R$, where either $R=\mathbb{Z}$ or $R=\mathbb{Z}_2$. Assertions made without specifying the ring $R$ are meant to hold for both options of $R$. Of course, the several sign specifications below can be ignored when $R=\mathbb{Z}_2$.

\begin{definition}\label{defkforest} A $k$-forest on $\nn$ (or simply a $k$-forest) is an acyclic graph with two types of vertices, square and round, each containing a certain subset of $\nn$.
\begin{itemize}
\item A square vertex must contain $k-1$ elements of $\nn$, and cannot be an isolated vertex; in fact the set of immediate neighbors of a square vertex must contain a round vertex.
\item A round vertex must contain a single element of $\nn$, and must be either an isolated vertex or have valency 1, in which case it must be connected to a square vertex. 
\end{itemize}
We require that the subsets of integers inside the various vertices of a $k$-forest form a disjoint partition of $\nn$.

An orientation for a $k$-forest consists of three ingredients:
\begin{itemize}
    \item[(a)] An orientation for each edge;
    \item[(b)] An ordering for elements inside each square vertex;
    \item[(c)] An ordering for the \emph{orientation set}, i.\,e., the set consisting of all edges and all square vertices.
\end{itemize}

Square vertices are declared to have degree $d(k-2)$, while edges are declared to have degree $d-1$. The degree of a $k$-forest is then defined as the sum of the degrees of its square vertices and edges. 
\end{definition} 

\begin{example}
Consider the following $5$-forest of dimension $28$ (with $d=3$ and $n=14$):
    
\begin{center}
\begin{tikzpicture}
\node (A) [rectangle, draw, minimum width=10mm] at (-1.15,0) {$1\enskip 3\enskip 9\enskip 8$};
\node (B) [rectangle, draw, minimum width=10mm] at (1.15,0) {$4\enskip 5\enskip 6\enskip 13$};
\node (r1) [circle, draw, inner sep=0.5pt,minimum width=5mm] at (-2.75,0) {$2$};
\node (r2) [circle, draw, inner sep=0.5pt,minimum width=5mm] at (-1.5,1.25) {$11$};
\node (r3) [circle, draw, inner sep=0.5pt,minimum width=5mm] at (-0.5,1.25) {$12$};
\node (r4) [circle, draw, inner sep=0.5pt,minimum width=5mm] at (0.75,1.25) {$7$};
\node (r5) [circle, draw, inner sep=0.5pt,minimum width=5mm] at (1.75,1.25) {$10$};
\node (r6) [circle, draw, inner sep=0.5pt,minimum width=5mm] at (2.75,0) {$14$};
\draw[line width=0.5pt,-latex] ([xshift=-2mm]A.north)--(r2.south)node[midway,left]{\tiny $1$};
\node at ([xshift=-1mm,yshift=-1mm]A.south west) {\tiny $2$};
\draw[line width=0.5pt,latex-] ([xshift=2mm]A.north)--(r3.south)node[midway,right]{\tiny $3$};
\draw[line width=0.5pt,latex-] ([xshift=-2mm]B.north)--(r4.south)node[midway,left]{\tiny $4$};
\draw[line width=0.5pt,-latex] (A.east)--(B.west)node[midway,above]{\tiny $5$};
\draw[line width=0.5pt,-latex] ([xshift=2mm]B.north)--(r5.south)node[midway,left]{\tiny $6$};
\node at ([xshift=1mm,yshift=-1mm]B.south east) {\tiny $7$};
\end{tikzpicture}
\end{center}
The tiny numbers attached to square vertices and edges indicate the ordering in the orientation set.
\end{example}

We agree that, in a picture like this, the ordering of elements inside a square vertex is spelled out by listing the elements from left to right. As indicated in the following result, the $5$-forest above represents a $28$-dimen\-sional cohomology class in $M_3^{(5)}(14)$.

\begin{theorem}[{\cite[Theorem 6.1]{dobri2015}}]\label{sum} Let $d\geq 2$, $k\geq 3$ and $n\geq 1$. Additively, the cohomology of $\mdkn$ is the graded torsion-free $R$-module spanned by the oriented $k$-forests on $\nn$ subject to the relations listed below.
\begin{enumerate}
\item Orientation relations:
    \begin{itemize}
        \item[(i)] Permuting the order of the orientation set introduces the Koszul sign induced by the permutation (with respect to the degrees of the elements of the orientation set).
        \item[(ii)] A permutation $\sigma \in \Sigma_{k-1}$ of the elements inside a square vertex introduces the sign $\epsilon(\sigma)^{d}$, where $\epsilon(\sigma)$ stands for the sign of $\sigma$.
        \item[(iii)] Reversing the orientation of an edge introduces the sign $(-1)^d$.
    \end{itemize}
\item Three-term relation:
    \begin{center}
    \begin{tikzpicture}
    \node (A) [rectangle, draw, minimum width=8mm] at (-1,0) {$A$};
    \node (B) [rectangle, draw, minimum width=8mm] at (0,1) {$B$};
    \node (C) [rectangle, draw, minimum width=8mm] at (1,0) {$C$};
    \draw[line width=0.5pt,-latex] (A.north)--(B.south west)node[midway, xshift=-2mm, yshift=1mm]{\tiny $1$}; 
    \draw[line width=0.5pt,-latex] (B.south east)--(C.north)node[midway,xshift=2mm,yshift=1mm]{\tiny $2$};
    \begin{scope}[xshift=3.25cm]
    \node (A2) [rectangle, draw, minimum width=8mm] at (-1,0) {$A$};
    \node (B2) [rectangle, draw, minimum width=8mm] at (0,1) {$B$};
    \node (C2) [rectangle, draw, minimum width=8mm] at (1,0) {$C$};
    \draw[line width=0.5pt,latex-] (A2.east)--(C2.west)node[midway,below]{\tiny $2$}; 
    \draw[line width=0.5pt,-latex] (B2.south east)--(C2.north)node[midway, xshift=1mm, yshift=1.25mm] {\tiny $1$};
    \end{scope}
    \begin{scope}[xshift=6.5cm]
    \node (A3) [rectangle, draw, minimum width=8mm] at (-1,0) {$A$};
    \node (B3) [rectangle, draw, minimum width=8mm] at (0,1) {$B$};
    \node (C3) [rectangle, draw, minimum width=8mm] at (1,0) {$C$};
    \draw[line width=0.5pt,-latex] (A3.north)--(B3.south west)node[midway,xshift=-1.25mm, yshift=1mm]{\tiny $2$};
    \draw[line width=0.5pt,latex-] (A3.east)--(C3.west)node[midway,below]{\tiny $1$};
    \end{scope}
    \coordinate (D) at ($(C.east)!0.5!(A2.west)$);
    \coordinate (E) at ($(C2.east)!0.5!(A3.west)$);
    \node at ([yshift=6mm]D) {$+$};
    \node at ([yshift=6mm]E) {$+$};
    \node at ([shift={(6mm,6mm)}]C3.east) {$=$\enskip $0$.};
    \end{tikzpicture}
    \end{center}
    These pictures are local in the sense that we have three oriented $k$-forests that are identical except for the disposition of their edges connecting vertices $A$, $B$ and $C$, and for the relative ordering of these edges (indicated by the numbers shown) within the corresponding orientation sets.
\item Dual generalized Jacobi relation:
    \begin{center}
    \begin{tikzpicture}
    \node at (-.42,-1.5) {\red{$\cdots$}}; \node at (1.77,-1.5) {\red{$\cdots$}};
    \node (rec) [rectangle, draw] at (0,0) {$i_1$ $i_2$ $\cdots$ $i_{k-2}$ $j_\ell$};
    \node (r1) [circle, draw, inner sep=0.5pt,minimum width=7.5mm] at (-2,-1.5) {$j_{\scriptscriptstyle 1}$};
    \node (r2) [circle, draw, inner sep=0.5pt,minimum width=7.5mm] at (-1.1,-1.5) {$j_{\scriptscriptstyle 2}$};
    \node (r3) [circle, draw, inner sep=0.5pt,minimum width=7.5mm] at (.2,-1.5) {$j_{\scriptscriptstyle\footnotesize\ell-1}$};
    \node (r4) [circle, draw, inner sep=0.5pt,minimum width=7.5mm] at (1.1,-1.5) {$j_{\scriptscriptstyle\ell+1}$};
    \node (r5) [circle, draw, inner sep=0.5pt,minimum width=7.5mm] at (2.4,-1.5) {$j_{\scriptscriptstyle m}$};
    \draw[line width=0.5pt,-latex] (rec.south west)--(r1.north)node[midway,xshift=-2mm,yshift=0mm]{\tiny $1$};
    \draw[line width=0.5pt,-latex] ($(rec.south west)!0.5!(rec.south)$)--(r2.north)node[midway,xshift=-2mm,yshift=0mm]{\tiny $2$};
    \draw[line width=0.5pt,-latex] (rec.south)--(r3.north)node[midway,xshift=-5mm,yshift=-0.5mm]{\tiny $\cdots$};
    \draw[line width=0.5pt,-latex] ($(rec.south east)!0.5!(rec.south)$)--(r4.north);
    \draw[line width=0.5pt,-latex] (rec.south east)--(r5.north)node[midway,xshift=-5mm,yshift=-0.5mm]{\tiny $\cdots$}node[midway,xshift=5.2mm,yshift=0mm]{\tiny $m-1$};
    \node at (-3.5,-0.75) {$0=\displaystyle \sum_{\ell=1}^m (-1)^{\ell(d-1)}$};
    \node at (3.35,-0.75) {.};
    \end{tikzpicture}
    \end{center}
    Again pictures are local. Moreover, in the global picture, the square vertex cannot be connected to other (non shown) round vertices.
\end{enumerate}
Furthermore, basic oriented $k$-forests form a graded basis for the cohomology groups $H^\ast(\mdkn)$.
\end{theorem}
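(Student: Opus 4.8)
The plan is to compute $H^*(\mdkn)$ as the cohomology of the complement of a real subspace arrangement and to read the forest description off the combinatorics of its intersection lattice. First I would identify the intersection lattice of the arrangement $\{A_I\}$ with the \emph{$k$-equal partition lattice} $\Pi_{n,k}$, whose elements are the partitions of $\nn$ in which every block is either a singleton or has at least $k$ elements; the subspace attached to a partition with non-singleton blocks $B_1,\dots,B_m$ is then $\{x_i = x_j \text{ whenever } i,j \text{ share a block}\}$, of codimension $d\sum_j(|B_j|-1)$. The Goresky--MacPherson formula expresses the reduced cohomology of the complement additively as
\[
\tilde H^{\,i}(\mdkn)\;\cong\;\bigoplus_{\pi>\hat 0}\tilde H_{\,\operatorname{codim}(\pi)-i-2}\big(\Delta(\hat 0,\pi)\big),
\]
a sum over $\pi\neq\hat0$ in $\Pi_{n,k}$ of reduced homologies of the order complexes of the open lower intervals $(\hat0,\pi)$. (An alternative, closer to the source's methods, would build an explicit graph/Koszul cochain complex computing $H^*$; I follow the arrangement-theoretic route.)

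The next step is to pin down these intervals. Each $(\hat0,\pi)$ factors as a poset over the non-singleton blocks of $\pi$, so its order complex is the join of the complexes attached to the individual blocks, reducing the problem to analysing the interval from $\hat0$ to the single-block partition in one $k$-equal lattice on a block $B$. Here I would invoke the Bj\"orner--Welker analysis (re-derivable by an EL-shelling or a discrete Morse argument) that each such complex is homotopy equivalent to a wedge of spheres of a single dimension. This yields two things at once: the homology is free over $R$ with ranks independent of whether $R=\mathbb Z$ or $R=\mathbb Z_2$, which gives the asserted torsion-freeness, and an explicit combinatorial index set for a homology basis in each interval.

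I would then translate that index set into oriented $k$-forests. A partition $\pi$ supplies the connected components of the forest --- one component per non-singleton block --- while a chosen basis sphere of $\Delta(\hat0,\pi)$ is encoded by the tree shape together with the placement of the $k-1$ distinguished elements inside square vertices and the single elements inside round vertices; the three orientation data (edge orientations, orderings inside square vertices, and the ordering of the orientation set) record the signs of the homology generators. The degree bookkeeping matches exactly: assigning degree $d-1$ to each edge and $d(k-2)$ to each square vertex reproduces the dimension $\operatorname{codim}(\pi)-2$ of the relevant spheres, and for a single block of size $k$ one recovers the sphere $\mathbb S^{dk-d-1}$ of $M_d^{(k)}(k)$. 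This establishes the additive statement and identifies the \emph{basic} forests with a distinguished homology basis.

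The main obstacle is the final step: showing that \emph{all} oriented $k$-forests span $H^*(\mdkn)$ subject to precisely the three families of relations. The strategy is a rank squeeze. The orientation relations (i)--(iii) are immediate from the sign conventions for simplicial orientations and Koszul reordering, so the real content is that the three-term and dual generalized Jacobi relations both genuinely hold among the corresponding cohomology classes and suffice to rewrite every oriented forest as an $R$-combination of basic ones. For the former I would compute the relevant cup products directly: the degree-$(d-1)$ edge classes behave like the linking classes of configuration spaces and yield the Arnold-type three-term identity, while the dual generalized Jacobi relation is read off from the boundary maps of the order complexes above. For the latter I would give a straightening algorithm reducing an arbitrary forest to basic form. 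Letting $F$ be the free $R$-module on oriented forests and $\Phi\colon F/(\text{relations})\twoheadrightarrow H^*(\mdkn)$ the resulting surjection, the straightening bounds the rank of the source in each degree by the number of basic forests, while Goresky--MacPherson computes the rank of the target to be exactly that number; hence $\Phi$ is an isomorphism and the basic forests form a basis. I expect the explicit cup-product computation establishing the Jacobi relation to be the genuinely hard part, since, unlike the configuration-space case $k=2$, it involves the interaction between the edge classes and the higher-degree square-vertex classes.
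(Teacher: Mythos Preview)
The paper does not prove this theorem at all: it is quoted verbatim from \cite[Theorem~6.1]{dobri2015}, and the only commentary the paper offers is the remark immediately following the statement. That remark makes clear that the approach in the cited source is \emph{geometric} rather than lattice-theoretic: each oriented $k$-forest names a locally finite chain in $\mdkn$ (equivalently, a Borel--Moore homology class, identified with cohomology via Poincar\'e--Lefschetz duality), the three-term relation is the decomposition of one such chain as a union of two others, and the dual generalized Jacobi relation is a boundary relation among chains. Signs are accounted for by chain orientations.

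Your proposal is a genuinely different route. Goresky--MacPherson plus the Bj\"orner--Welker wedge-of-spheres description of the $k$-equal lattice will certainly deliver the torsion-freeness and the correct graded ranks, and your rank-squeeze strategy (relations hold, straightening surjects onto basic forests, ranks match) is the standard way to pin down a presentation. What each approach buys: the Borel--Moore picture makes the relations essentially tautological (they are visible identities among geometric cycles) and, crucially, also yields the cup-product description in Theorem~\ref{prod} as intersection of chains; your combinatorial route gives the additive answer cleanly but leaves the relations to be verified by hand.

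That last point is where your sketch is thinnest. You propose to obtain the three-term relation by ``computing the relevant cup products directly'' in analogy with the Arnold relation for configuration spaces, but the three-term relation here is an \emph{additive} identity among three $k$-forest classes, not a multiplicative one; there is no evident way to read it off from cup products without already knowing the ring structure of Theorem~\ref{prod}, which logically comes after. In the Borel--Moore framework this relation is immediate geometry, whereas in the Goresky--MacPherson framework you would need to exhibit an explicit homology between the corresponding cycles in the order complex, which is doable but is real work you have not described. The same caveat applies, more sharply, to the dual generalized Jacobi relation.
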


In the final assertion of Theorem~\ref{sum}, an oriented $k$-forest is called \emph{basic} if, ignoring orientations of edges, any of its connected components is either an isolated round vertex or, else, a ``semilinear" tree, i.e.~a tree having the (global) form
\begin{center}
\begin{tikzpicture} 
\node (r1) [circle, draw, minimum width=5mm] at (-0.75,1) {};
\node (r2) [circle, draw, minimum width=5mm] at (0.75,1) {};
\node (A1) [rectangle, draw, minimum width=1cm] at (0,0) {$A_1$};
\draw[line width=0.5pt] (A1.north west)--(r1.285);
\node at (0,1) {$\cdots$};
\draw[line width=0.5pt] (A1.north east)--(r2.255);
\begin{scope}[xshift=2.5cm]
\node (r3) [circle, draw, minimum width=5mm] at (-0.75,1) {};
\node (r4) [circle, draw, minimum width=5mm] at (0.75,1) {};
\node (A2) [rectangle, draw, minimum width=1cm] at (0,0) {$A_2$};
\draw[line width=0.5pt] (A2.north west)--(r3.285);
\node at (0,1) {$\cdots$};
\draw[line width=0.5pt] (A2.north east)--(r4.255);
\end{scope}
\draw[line width=0.5pt] (A1.east)--(A2.west);
\begin{scope}[xshift=6cm]
\node (r5) [circle, draw, minimum width=5mm] at (-0.75,1) {};
\node (r6) [circle, draw, minimum width=5mm] at (0.75,1) {};
\node (A3) [rectangle, draw, minimum width=1cm] at (0,0) {$A_s$};
\draw[line width=0.5pt] (A3.north west)--(r5.285);
\node at (0,1) {$\cdots$};
\draw[line width=0.5pt] (A3.north east)--(r6.255);
\end{scope}
\draw[line width=0.5pt] (A2.east)--(A3.west)node[midway, fill=white]{$\dots$};
\end{tikzpicture}
\end{center}
with the following additional requirements:
\begin{itemize}
\item $A_1<A_2<\cdots <A_s$ in the orientation set.
\item For a portion of the form
\begin{center}
\begin{tikzpicture} 
\node (r1) [circle, draw, minimum width=5mm] at (-0.75,1) {};
\node (r2) [circle, draw, minimum width=5mm] at (0.75,1) {};
\node (A1) [rectangle, draw, minimum width=1cm] at (0,0) {$A_i$};
\draw[line width=0.5pt] (A1.north west)--(r1.285);
\node at (0,1) {$\cdots$};
\draw[line width=0.5pt] (A1.north east)--(r2.255);
\end{tikzpicture}
\end{center}
the elements inside the square vertex appear in their natural order. Likewise, the ordering (in the orientation set) of the edges attaching round vertices to the square vertex agrees with the na\-tural order of the integers inside those round vertices.
\item The largest of the integers inside the round vertices attached to each square vertex $A_i$ is larger than any of the integers inside $A_i$.
\item The minimal element in the semilinear tree appears either inside $A_1$ or inside a round vertex attached to $A_1$.
\end{itemize}

\begin{remark}
The above description arises as a result of considering the Borel-Moore homology of $\mdkn$, where each $k$-forest represents a locally finite chain in $\mdkn$ whose boundary lies outside $\mdkn$, and sums correspond to unions of these chains. Signs described in Theorem~\ref{sum} arise from a consistent management of chain orientations. For example, the tree-term relation
\begin{center}
    \begin{tikzpicture}
    \node (A) [rectangle, draw, minimum width=8mm] at (-1,0) {$A$};
    \node (B) [rectangle, draw, minimum width=8mm] at (0,1) {$B$};
    \node (C) [rectangle, draw, minimum width=8mm] at (1,0) {$C$};
    \draw[line width=0.5pt,-latex] (A.north)--(B.south west)node[midway, xshift=-2mm, yshift=1mm]{\tiny $1$}; 
    \draw[line width=0.5pt,-latex] (B.south east)--(C.north)node[midway,xshift=2mm,yshift=1mm]{\tiny $2$};
    \begin{scope}[xshift=3.25cm]
    \node (A2) [rectangle, draw, minimum width=8mm] at (-1,0) {$A$};
    \node (B2) [rectangle, draw, minimum width=8mm] at (0,1) {$B$};
    \node (C2) [rectangle, draw, minimum width=8mm] at (1,0) {$C$};
    \draw[line width=0.5pt,latex-] (A2.east)--(C2.west)node[midway,below]{\tiny $2$}; 
    \draw[line width=0.5pt,-latex] (B2.south east)--(C2.north)node[midway, xshift=1mm, yshift=1.25mm] {\tiny $1$};
    \end{scope}
    \begin{scope}[xshift=6.5cm]
    \node (A3) [rectangle, draw, minimum width=8mm] at (-1,0) {$A$};
    \node (B3) [rectangle, draw, minimum width=8mm] at (0,1) {$B$};
    \node (C3) [rectangle, draw, minimum width=8mm] at (1,0) {$C$};
    \draw[line width=0.5pt,-latex] (A3.north)--(B3.south west)node[midway,xshift=-1.25mm, yshift=1mm]{\tiny $2$};
    \draw[line width=0.5pt,latex-] (A3.east)--(C3.west)node[midway,below]{\tiny $1$};
    \end{scope}
    \coordinate (D) at ($(C.east)!0.5!(A2.west)$);
    \coordinate (E) at ($(C2.east)!0.5!(A3.west)$);
    \node at ([yshift=6mm]D) {$+$};
    \node at ([yshift=6mm]E) {$+$};
    \node at ([shift={(6mm,6mm)}]C3.east) {$=$\enskip $0$.};
    \end{tikzpicture}
    \end{center}
is a rearrangement, under the sign conventions, of the element that corresponds to the union of two locally finite chains:
\begin{center}
    \begin{tikzpicture}
    \node (A) [rectangle, draw, minimum width=8mm] at (-1,0) {$A$};
    \node (B) [rectangle, draw, minimum width=8mm] at (0,1) {$B$};
    \node (C) [rectangle, draw, minimum width=8mm] at (1,0) {$C$};
    \draw[line width=0.5pt,latex-] (A.north)--(B.south west)node[midway, xshift=-2mm, yshift=1mm]{\tiny $1$}; 
    \draw[line width=0.5pt,-latex] (B.south east)--(C.north)node[midway,xshift=2mm,yshift=1mm]{\tiny $2$};
    \begin{scope}[xshift=3.75cm]
    \node (A2) [rectangle, draw, minimum width=8mm] at (-1,0) {$A$};
    \node (B2) [rectangle, draw, minimum width=8mm] at (0,1) {$B$};
    \node (C2) [rectangle, draw, minimum width=8mm] at (1,0) {$C$};
    \draw[line width=0.5pt,latex-] (A2.east)--(C2.west)node[midway,below]{\tiny $1$}; 
    \draw[line width=0.5pt,-latex] (B2.south east)--(C2.north)node[midway, xshift=1mm, yshift=1.25mm] {\tiny $2$};
    \end{scope}
    \begin{scope}[xshift=7.3cm]
    \node (A3) [rectangle, draw, minimum width=8mm] at (-1,0) {$A$};
    \node (B3) [rectangle, draw, minimum width=8mm] at (0,1) {$B$};
    \node (C3) [rectangle, draw, minimum width=8mm] at (1,0) {$C$};
    \draw[line width=0.5pt,latex-] (A3.north)--(B3.south west)node[midway,xshift=-1.25mm, yshift=1mm]{\tiny $1$};
    \draw[line width=0.5pt,-latex] (A3.east)--(C3.west)node[midway,below]{\tiny $2$};
    \end{scope}
    \coordinate (D) at ($(C.east)!0.5!(A2.west)$);
    \coordinate (E) at ($(C2.east)!0.5!(A3.west)$);
    \node at ([yshift=6mm]D) {$=$};
    \node at ([yshift=6mm]E) {$+$};
    %\node at ([shift={(6mm,6mm)}]C3.east) {$=$\enskip $0$.};
    \end{tikzpicture}\enskip\raisebox{0.4em}{.}
    \end{center}
Similarly, the generalized Jacobi relation is a boundary relation in terms of $k$-forests. Under such a (Poincaré duality) approach, cohomology cup-products are readable as intersection products in Borel-Moore homology. The product structure is spelled out in Theorem~\ref{prod}. For further details see~\cite{dobri2015}.
\end{remark}

\begin{remark}
Observe that interchanging a square vertex with an edge does not modify the sign of the forest because $(d-1)\cdot d(k-2)$ is always even. Thus, the orientation set only keeps track of the relative orientation of square vertices and edges.
\end{remark}

The multiplicative structure in the cohomology of $\mdkn$ is dictated by the following result:

\begin{theorem}[{\cite[Theorem 7.1]{dobri2015}}]\label{prod} For $n,k,d$ as in Theorem~\ref{sum}, let $T_1$, $T_2 \in H^\ast(\mdkn)$ be two oriented $k$-forests. The cup product of $T_1$ and $T_2$ is zero if either of the following three conditions holds:
\begin{itemize}
\item[(1)] There exist a square vertex $A$ in $T_1$ and a square vertex $B$ in $T_2$ such that $A\cap B \neq \emptyset$.

In case that no square vertex of $T_1$ intersects a square vertex of $T_2$, we define the superposition $T_1\cup T_2$ as the graph obtained by superposition of the vertices of $T_1$ and $T_2$ with the convention that if some integer $i \in \nn$ lies in a round vertex in, say, $T_{2}$ as well as in a square vertex $A$ in $T_{1}$, then $i$ appears in $T_1\cup T_2$ inside the corresponding square vertex $A$, and if there were some oriented edge in $T_{2}$ from the round vertex containing $i$ to some square vertex $B$, then a corresponding oriented edge between vertices $A$ and $B$ in $T_1\cup T_2$ would have to be added:
\begin{center}
\begin{tikzpicture}[scale=0.85]
\node (A) [circle, draw, inner sep=0.5pt,minimum width=5mm] {$i$};
\node (B) [rectangle, draw, minimum width=1.6cm, minimum height=0.7cm] at (-0.4,0) {\hspace*{-6mm}$A$};
\node (B) [rectangle, draw, minimum width=1.4cm, minimum height=0.7cm] at (3,0) {$B$};
\draw[line width=0.5pt,-latex] (A.east)--(B.west);
\end{tikzpicture}\enskip\raisebox{0em}{.}
\end{center}
(This of course might lead to multiple edges between two given square vertices in $T_1\cup T_2$.)

\item[(2)] $T_1 \cup T_2$ has unoriented cycles (for instance if two square vertices of $T_1 \cup T_2$ are joined by multiple edges). %Where $T_1 \cup T_2$ is the superposition of the $k$-forests (assuming they have disjoint square vertices).
\item[(3)] $T_1\cup T_2$ has a square vertex with no round vertex attached.
\end{itemize}
Otherwise, $T_1 \cdot T_2 = T_1\cup T_2$, the superposition of the $k$-forests with orientation set given by the concatenation of the orientation sets of the factors, and with the convention that, if $T_1\cup T_2$ is not a $k$-forest (in the sense of Definition~\ref{defkforest}), so that $T_1 \cup T_2$ has one or several round vertices of valency $2$ (with two square vertices as its immediate neighboring vertices), then we use repeatedly the following form of the three-term relation to write $T_1\cup T_2$ as a sum of $k$-forests:
\begin{equation}\label{productrelation}
\tag{R}
\begin{tikzpicture}[baseline=(current bounding box.center)]
\node (A) [rectangle, draw, minimum width=8mm] at (-1,0) {$A$};
\node (B) [circle, draw, minimum width=5mm] at (0,1) {};
\node (C) [rectangle, draw, minimum width=8mm] at (1,0) {$B$};
\draw[line width=0.5pt,latex-] (A.north)--(B.210)node[midway,xshift=-2mm,yshift=1mm]{\tiny $1$};
\draw[line width=0.5pt,-latex] (B.330)--(C.north)node[midway,xshift=2mm, yshift=1mm]{\tiny $2$};
\begin{scope}[xshift=3.25cm]
\node (A2) [rectangle, draw, minimum width=8mm] at (-1,0) {$A$};
\node (B2) [circle, draw, minimum width=5mm] at (0,1) {};
\node (C2) [rectangle, draw, minimum width=8mm] at (1,0) {$B$};
\draw[line width=0.5pt,-latex] (A2.east)--(C2.west)node[midway,below]{\tiny $2$};
\draw[line width=0.5pt,-latex] (B2.210)--(A2.north)node[midway,xshift=-2mm,yshift=1mm]{\tiny $1$};
\end{scope}
\begin{scope}[xshift=6.5cm]
\node (A3) [rectangle, draw, minimum width=8mm] at (-1,0) {$A$};
\node (B3) [circle, draw, minimum width=5mm] at (0,1) {};
\node (C3) [rectangle, draw, minimum width=8mm] at (1,0) {$B$};
\draw[line width=0.5pt,latex-] (C3.north)--(B3.330)node[midway,xshift=2mm,yshift=1mm]{\tiny $2$};
\draw[line width=0.5pt,latex-] (A3.east)--(C3.west)node[midway,below]{\tiny $1$};
\end{scope}
\coordinate (D) at ($(C.east)!0.5!(A2.west)$);
\coordinate (E) at ($(C2.east)!0.5!(A3.west)$);
\node at ([yshift=6mm]D) {$=$};
\node at ([yshift=6mm]E) {$+$};
\end{tikzpicture}\enskip\raisebox{-1.85em}{.}
\end{equation}
As above, these pictures are local.
\end{theorem}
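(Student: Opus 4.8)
The plan is to prove Theorem~\ref{prod} geometrically, taking Theorem~\ref{sum} and the geometric realization of $k$-forests underlying it as given, and exploiting that $\mdkn$ is an open, hence orientable, smooth manifold of dimension $dn$. Poincar\'e duality identifies $H^*(\mdkn)$ with the Borel--Moore homology $H^{BM}_{dn-\ast}(\mdkn)$, and under this identification the cup product becomes the intersection product of locally finite cycles --- precisely the picture recalled in the Remark preceding Theorem~\ref{prod}. Concretely, to an oriented $k$-forest $T$ of degree $\delta$ one attaches a closed subset $V_T \subseteq (\mathbb{R}^d)^n$ of dimension $dn-\delta$: each square vertex with label set $A$ (so $|A| = k-1$) imposes the codimension-$d(k-2)$ condition that the coordinates indexed by $A$ coincide, and each oriented edge imposes a codimension-$(d-1)$ ``collision'' condition recording how the endpoint round vertex approaches its square neighbour, the edge orientation and the orderings in the orientation set pinning down signs. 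One checks, as in the proof of Theorem~\ref{sum} in \cite{dobri2015}, that $V_T \cap \mdkn$ is a locally finite cycle whose class depends on $T$ only through the relations of Theorem~\ref{sum}.

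The heart of the argument is the computation of $[V_{T_1}] \cdot [V_{T_2}]$ as an intersection, and each vanishing criterion of Theorem~\ref{prod} reflects a geometric degeneracy. If a square vertex $A$ of $T_1$ meets a square vertex $B$ of $T_2$, then either $A \neq B$, whence $|A \cap B| \leq k-2$ forces $|A \cup B| \geq k$, so $V_{T_1} \cap V_{T_2}$ is absorbed into the deleted no-$k$-equal arrangement and is null in $H^{BM}_{\ast}(\mdkn)$; or $A = B$, in which case the codimension-$d(k-2)$ condition is imposed twice, the intersection carries excess dimension, and the class vanishes (a representative of $[V_{T_1}]$ in general position relative to $V_{T_2}$ can be swept into the arrangement). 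If $T_1 \cup T_2$ has an unoriented cycle, the corresponding collision conditions are dependent, so the intersection again fails to meet in the expected codimension $\delta_1 + \delta_2$ and one shows the class is zero; and if $T_1 \cup T_2$ has a square vertex with no round neighbour, the associated chain degenerates into the arrangement. In the remaining, generic case $V_{T_1}$ and $V_{T_2}$ intersect transversally with $V_{T_1} \cap V_{T_2} = V_{T_1 \cup T_2}$, and the usual orientation convention for transverse intersection of cooriented cycles yields exactly the concatenation of the orientation sets of $T_1$ and $T_2$, any residual reordering being absorbed by the Koszul-sign relation~(i) of Theorem~\ref{sum}; that $(d-1)\cdot d(k-2)$ is even --- observed in the Remark above --- is what allows square vertices and edges to be interchanged in the orientation set without sign.

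Finally, when $T_1 \cup T_2$ is not literally a $k$-forest it has round vertices of valency $2$ trapped between two square vertices; for each such configuration the underlying locally finite chain is the union of the two chains obtained by resolving the double collision, and writing this union out under the orientation conventions is exactly relation~\eqref{productrelation}, whose iterated use rewrites $V_{T_1 \cup T_2}$ as a sum of basic $k$-forests. The step I expect to be the real obstacle is the orientation bookkeeping throughout: arranging the realization $T \mapsto V_T$ so that \emph{all} the relations of Theorem~\ref{sum} --- the three sign relations, the three-term relation, and the dual generalized Jacobi relation --- hold at chain level, and then checking, case by case and with the correct signs, that the intersection product reproduces the combinatorial superposition recipe, including the degenerate configurations where the geometric intersection is not of the expected dimension. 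This is exactly the chain-orientation analysis carried out in \cite{dobri2015}; a self-contained argument would reproduce it, and essentially all of the genuine work is there rather than in the bilinearity and associativity formalities.
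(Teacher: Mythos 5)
Your proposal is consistent with the paper, which does not prove this statement at all: Theorem~\ref{prod} is imported verbatim from \cite[Theorem 7.1]{dobri2015}, with only the remark after Theorem~\ref{sum} sketching the same Borel--Moore/intersection-product picture you describe. Your outline follows exactly that geometric approach, with the (self-acknowledged) caveat that the substantive work --- constructing the chains $V_T$, verifying the relations of Theorem~\ref{sum} at chain level, and the case-by-case orientation analysis of the intersections --- is deferred to \cite{dobri2015} rather than carried out.
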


\begin{example}\label{ejemplito1}
Item~(3) in Theorem~\ref{prod} might have to be used in the iterative process of applying~(\ref{productrelation}) to write $T_1\cup T_2$ as a sum of (basic) $k$-forests. For instance, if the pictures in~(\ref{productrelation}) are in fact global (omitting isolated round vertices), then the two summands on the right of~(\ref{productrelation}) would vanish in view of item~(3) in Theorem~\ref{prod}.
\end{example}

Relevant for us is to note that the algebra $H^\ast(\mdkn)$ is generated by basic oriented $k$-forests having a single square vertex; such a generator will be said to be \emph{elementary}. Explicitly, a basic oriented $k$-forest is, up to sign, the product of its connected components. In turn, each such connected component is, up to sign, a product of elementary oriented $k$-forests. For example, the basic oriented $3$-forest

\begin{equation}\label{exampleproducbasic}
\tag{F}
\begin{tikzpicture}[baseline=(current bounding box.center)]
\node (A) [rectangle, draw, minimum width=10mm] at (-1,0) {$1\enskip 2$};
\node (B) [rectangle, draw, minimum width=10mm] at (1,0) {$4\enskip 5$};
\node (C) [rectangle, draw, minimum width=10mm] at (3,0) {$7\enskip 8$};
\node (r1) [circle, draw, inner sep=0.5pt,minimum width=5mm] at (-1,1) {$3$};
\node (r2) [circle, draw, inner sep=0.5pt,minimum width=5mm] at (1,1) {$6$};
\node (r3) [circle, draw, inner sep=0.5pt,minimum width=5mm] at (3,1) {$9$};
\node at ([xshift=1mm,yshift=-1mm]A.south east) {\tiny $1$};
\node at ([xshift=1mm,yshift=-1mm]B.south east) {\tiny $2$};
\node at ([xshift=1mm,yshift=-1mm]C.south east) {\tiny $3$};
\draw[line width=0.5pt,-latex] (A.north)--(r1.south)node[midway,left]{\tiny $4$};
\draw[line width=0.5pt,-latex] (B.north)--(r2.south)node[midway,left]{\tiny $6$};
\draw[line width=0.5pt,-latex] (C.north)--(r3.south)node[midway,left]{\tiny $8$};
\draw[line width=0.5pt,-latex] (A.east)--(B.west)node[midway,above]{\tiny $5$};
\draw[line width=0.5pt,-latex] (B.east)--(C.west)node[midway,above]{\tiny $7$};
\end{tikzpicture}
\end{equation}
is the product
\[
\left(\rule[-6mm]{0pt}{12mm}\right.\  
\begin{tikzpicture}[baseline=(current bounding box.center)]
\node (A) [rectangle, draw, minimum width=10mm] at (-1,0) {$1\enskip 2$};
\node (r1) [circle, draw, inner sep=0.5pt,minimum width=5mm] at (-1.35,1) {$3$};
\node (r2) [circle, draw, inner sep=0.5pt,minimum width=5mm] at (-0.65,1) {$4$};
\node at ([xshift=1mm,yshift=-1mm]A.south east) {\tiny $1$};
\draw[line width=0.5pt,-latex] ([xshift=-1mm]A.north)--(r1.south)node[midway,left]{\tiny $2$};
\draw[line width=0.5pt,-latex] ([xshift=1mm]A.north)--(r2.south)node[midway,right]{\tiny $3$};
\end{tikzpicture}\left.\rule[-6mm]{0pt}{12mm}\right)
\left(\rule[-6mm]{0pt}{12mm}\right.\  
\begin{tikzpicture}[baseline=(current bounding box.center)]
\node (A) [rectangle, draw, minimum width=10mm] at (-1,0) {$4\enskip 5$};
\node (r1) [circle, draw, inner sep=0.5pt,minimum width=5mm] at (-1.35,1) {$6$};
\node (r2) [circle, draw, inner sep=0.5pt,minimum width=5mm] at (-0.65,1) {$7$};
\node at ([xshift=1mm,yshift=-1mm]A.south east) {\tiny $1$};
\draw[line width=0.5pt,-latex] ([xshift=-1mm]A.north)--(r1.south)node[midway,left]{\tiny $2$};
\draw[line width=0.5pt,-latex] ([xshift=1mm]A.north)--(r2.south)node[midway,right]{\tiny $3$};
\end{tikzpicture}\left.\rule[-6mm]{0pt}{12mm}\right)
\left(\rule[-6mm]{0pt}{12mm}\right.\  
\begin{tikzpicture}[baseline=(current bounding box.center)]
\node (A) [rectangle, draw, minimum width=10mm] at (-1,0) {$7\enskip 8$};
\node (r2) [circle, draw, inner sep=0.5pt,minimum width=5mm] at (-1,1) {$9$};
\node at ([xshift=1mm,yshift=-1mm]A.south east) {\tiny $1$};
\draw[line width=0.5pt,-latex] ([xshift=0mm]A.north)--(r2.south)node[midway,right]{\tiny $2$};
\end{tikzpicture}\left.\rule[-6mm]{0pt}{12mm}\right)
\]
where we have omitted to write isolated round vertices.

In some arguments below we will consider $\mathbb{Z}_2$ representations of $k$-forests so to avoid sign and orientation conventions.  In those cases, a positive-degree connected component of a basic $k$-forest is a semilinear undirected tree
\begin{center}
\begin{tikzpicture} 
\node (r1) [circle, draw, minimum width=5mm] at (-0.75,1) {};
\node (r2) [circle, draw, minimum width=5mm] at (0.75,1) {};
\node (A1) [rectangle, draw, minimum width=1cm] at (0,0) {$A_1$};
\draw[line width=0.5pt] (A1.north west)--(r1.285);
\node at (0,1) {$\cdots$};
\draw[line width=0.5pt] (A1.north east)--(r2.255);
\begin{scope}[xshift=2.5cm]
\node (r3) [circle, draw, minimum width=5mm] at (-0.75,1) {};
\node (r4) [circle, draw, minimum width=5mm] at (0.75,1) {};
\node (A2) [rectangle, draw, minimum width=1cm] at (0,0) {$A_2$};
\draw[line width=0.5pt] (A2.north west)--(r3.285);
\node at (0,1) {$\cdots$};
\draw[line width=0.5pt] (A2.north east)--(r4.255);
\end{scope}
\draw[line width=0.5pt] (A1.east)--(A2.west);
\begin{scope}[xshift=6cm]
\node (r5) [circle, draw, minimum width=5mm] at (-0.75,1) {};
\node (r6) [circle, draw, minimum width=5mm] at (0.75,1) {};
\node (A3) [rectangle, draw, minimum width=1cm] at (0,0) {$A_s$};
\draw[line width=0.5pt] (A3.north west)--(r5.285);
\node at (0,1) {$\cdots$};
\draw[line width=0.5pt] (A3.north east)--(r6.255);
\end{scope}
\draw[line width=0.5pt] (A2.east)--(A3.west)node[midway, fill=white]{$\dots$};
\end{tikzpicture}
\end{center}
where one of the integers in round vertices attached to each $A_i$ is larger than any of the vertices inside $A_i$, and where the smallest of the integers in the vertices of the component lies either in $A_1$ or $A_s$ or in a round vertex attached to $A_1$ or to $A_s$. Further, such a $\mathbb{Z}_2$-equipped component will be lifted canonically to a $\mathbb{Z}$-equiped component. Namely, integers inside a square vertex are taken with their natural order; edges attaching round vertices to a given square vertex are taken with the natural order of the integers they contain; edges attaching a round vertex to a square vertex are oriented to point toward the round vertex; edges between square vertices are oriented to follow a linear path starting from the portion 
\begin{center}
\begin{tikzpicture} 
\node (r1) [circle, draw, minimum width=5mm] at (-0.75,1) {};
\node (r2) [circle, draw, minimum width=5mm] at (0.75,1) {};
\node (A1) [rectangle, draw, minimum width=1cm] at (0,0) {\phantom{$A_1$}};
\draw[line width=0.5pt,-latex] (A1.north west)--(r1.285);
\node at (0,1) {$\cdots$};
\draw[line width=0.5pt,-latex] (A1.north east)--(r2.255);
\end{tikzpicture}
\end{center}
containing the smallest integer of the component; lastly, regarding the global ordering of the orientation set, square vertices are ordered in the direction of the (already) oriented linear tree, while any edge departing from $A_i$ is declared to be smaller than any edge departing from $A_j$ provided $i<j$. In the case $i=j$ we declare that a potential edge 
\begin{center}
\begin{tikzpicture} 
\node (A1) [rectangle, draw, minimum width=1cm] at (0,0) {{$A_i$}};
\node (A2) [rectangle, draw, minimum width=1cm] at (2,0) {{$A_{i+1}$}};
\draw[line width=0.5pt,-latex] (A1.east)--(A2.west);
\end{tikzpicture}
\end{center}
is larger than any other edge departing from $A_i$. By definition, such a $\mathbb{Z}$-equipment of the component yields a basic component. See for instance the orientations and ordering shown in \eqref{exampleproducbasic}.

\begin{example}\label{exampleYuzvinsky} The non-trivial cohomology groups of the manifold $M_2^{(3)}(6)$ are described by Yuzvinsky in \cite[page 1944]{yuz2002} as follows:
\begin{center}
\renewcommand{\arraystretch}{1.25}
\begin{tabular}{*{2}{c}}\toprule
$H^\ast(M_2^{(3)}(6))$ & rank \\\midrule
$H^3(M_2^{(3)}(6))$ & 20 \\
$H^4(M_2^{(3)}(6))$ & 45 \\
$H^5(M_2^{(3)}(6))$ & 36 \\
$H_1^6(M_2^{(3)}(6))$ & 10 \\
$H_2^6(M_2^{(3)}(6))$ &  10\\
$H^7(M_2^{(3)}(6))$ & 10 \\\bottomrule
\end{tabular}
\end{center}
where $H^6(M_2^{(3)}(6))=H_1^6(M_2^{(3)}(6)) \oplus H_2^6(M_2^{(3)}(6))$. Yuzvinsky shows that there are non-zero cup products only in $H^3(M_2^{(3)}(6))\otimes H^3(M_2^{(3)}(6)) \twoheadrightarrow H_2^6(M_2^{(3)}(6))$ and $H^3(M_2^{(3)}(6))\otimes H^4(M_2^{(3)}(6)) \twoheadrightarrow H^7(M_2^{(3)}(6))$. All these facts are rather transparent using the description of $H^\ast(M_2^{(3)}(6))$ in terms of $3$-forests:
\begin{itemize}
\item The smallest dimension where the cohomology is non trivial is given by the minimal dimension of an elementary $3$-forest, a square vertex with one round vertex attached, and it has dimension $3$. As three numbers determine an elementary $3$-forest, there are $\mybin{6}{3}=20$ basis elements in $H^3(M_2^{(3)}(6))$. Here and below, non-explicited orientation and orderings are taken as explained above.
\item The next cohomological dimension is generated by elementary $3$-forests with one square and two round vertices attached. In this case we can first select $4$ numbers to fill in the square and round vertices. From those numbers, the greatest value is forced to be in a round vertex so it only remains to determine the value of the other round vertex. Therefore there are $\mybin{6}{4}\mybin{3}{1}=45$ basis elements in $H^4(M_2^{(3)}(6))$.
\item The rank of $H^5(M_2^{(3)}(6))$ is obtained similarly, in this case there are $\mybin{6}{5}\mybin{4}{2} = 36$ basis elements.
\item Dimension $6$ is the first case where products appear. Here we have two types of basis elements:
\begin{itemize}
\item Basic $3$-forests with two square vertices and a single round vertex attached to each square vertex. There are $\mybin{6}{3}/2=10$ such basis elements. The group $H_2^6(M_2^{(3)}(6))$ is generated by these basic (but non-elementary) 3-forests, for each of them clearly is the product (up to a sign) of two elementary $3$-forests of dimension 3 (the superposition of its two components).
\item Elementary $3$-forests with one square vertex connected to four round vertices. There are $\mybin{5}{3}=10$ such basis elements (all of them being linearly independent modulo product-decomposable elements), by an analysis similar to the ones in previous items. This corresponds to the summand $H_1^6(M_2^{(3)}(6))$.
\end{itemize}
\item Basic 3-forests in $H^7(M_2^{(3)}(6))$ are necessarily of the form
\begin{center}
\begin{tikzpicture} 
\node (r2) [circle, draw, minimum width=5mm] at (0.75,1) {$i$};
\node (A1) [rectangle, draw, minimum width=1cm] at (0,0) {$A$};
\draw[line width=0.5pt] (A1.north east)--(r2.255);
\begin{scope}[xshift=2.5cm]
\node (r4) [circle, draw, minimum width=5mm] at (0.75,1) {$j$};
\node (A2) [rectangle, draw, minimum width=1cm] at (0,0) {$B$};
\draw[line width=0.5pt] (A2.north east)--(r4.255);
\end{scope}
\draw[line width=0.5pt] (A1.east)--(A2.west);
\end{tikzpicture}
\end{center}
(orientation matters, as well as isolated round vertices are being ignored) and we can assume without loss of generality that $i<j$. Such a basis element is the product of two elementary 3-forests, one of dimension 3 and one of dimension 4, namely
\begin{center}
\begin{tikzpicture} 
\node (r2) [circle, draw, minimum width=5mm] at (0.75,1) {$i$};
\node (A1) [rectangle, draw, minimum width=1cm] at (0,0) {$A$};
\draw[line width=0.5pt] (A1.north east)--(r2.255);
\begin{scope}[xshift=2.75cm]
\node (r3) [circle, draw, minimum width=5mm] at (-0.75,1) {$i$};
\node (r4) [circle, draw, minimum width=5mm] at (0.75,1) {$j$};
\node (A2) [rectangle, draw, minimum width=1cm] at (0,0) {$B$};
\draw[line width=0.5pt] (A2.north west)--(r3.285);
\draw[line width=0.5pt] (A2.north east)--(r4.255);
\end{scope}
\node at ([yshift=2mm]$(A1.east)!0.5!(A2.west)$) {$\cdot$};
\end{tikzpicture}\raisebox{0em}{,}
\end{center}
where isolated round vertices in both factors have been omitted. Note there are $\mybin{6}{3}/2=10$ basis elements in dimension $7$.
\item Finally, observe it is not possible to construct generators of dimension greater than 7, because we can add neither more vertices nor more edges.
\end{itemize}
\end{example}

Elements of maximal dimension in Example~\ref{exampleYuzvinsky} are $k$-forests having a single semilinear tree component of the form
\begin{center}
\begin{tikzpicture} 
\node (r1) [circle, draw, minimum width=5mm] at (-0.75,1) {};
\node (A1) [rectangle, draw, minimum width=1cm] at (0,0) {$A_1$};
\draw[line width=0.5pt] (A1.north west)--(r1.285);
\begin{scope}[xshift=2.5cm]
\node (r3) [circle, draw, minimum width=5mm] at (-0.75,1) {};
\node (A2) [rectangle, draw, minimum width=1cm] at (0,0) {$A_2$};
\draw[line width=0.5pt] (A2.north west)--(r3.285);
\end{scope}
\draw[line width=0.5pt] (A1.east)--(A2.west);
\begin{scope}[xshift=6cm]
\node (r5) [circle, draw, minimum width=5mm] at (-0.75,1) {};
\node (A3) [rectangle, draw, minimum width=1cm] at (0,0) {$A_m$};
\draw[line width=0.5pt] (A3.north west)--(r5.285);
\end{scope}
\draw[line width=0.5pt] (A2.east)--(A3.west)node[midway, fill=white]{$\dots$};
\end{tikzpicture}
\end{center}
More generally:

\begin{lemma}\label{miversion}
Let $n,k,d$ be as in Theorem~\ref{sum}. Elements of maximal dimension in $H^*(\mdkn)$ are given by sums of basic $k$-forests having a single component which is a semilinear tree with $\floor{\frac{n}{k}}$ square vertices and $n-(k-1)\floor{\frac{n}{k}}$ round vertices.
\end{lemma}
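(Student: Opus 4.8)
The plan is to reduce the statement to a purely combinatorial optimization. By the last assertion of Theorem~\ref{sum}, the basic oriented $k$-forests form an $R$-basis of $H^*(\mdkn)$; hence the top non-zero degree of $H^*(\mdkn)$ is the maximal degree of a basic $k$-forest, and every cohomology class in that degree is an $R$-linear combination of the degree-maximal basic $k$-forests. So it is enough to prove that the degree of any basic $k$-forest $F$ is at most $D:=\floor{n/k}(k-2)+(d-1)(n-1)$, with equality exactly when $F$ is a single semilinear tree with $\floor{n/k}$ square vertices --- in which case $F$ automatically has $n-(k-1)\floor{n/k}$ round vertices, since the contents of the vertices partition $\nn$ --- and then to exhibit such an $F$.

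First I would set up the count. Let $q$ be the number of square vertices of $F$; as each square vertex holds $k-1$ elements of $\nn$ and each round vertex holds one, the number of round vertices is $r=n-(k-1)q$. Write the component decomposition of the basic $k$-forest $F$ as $r_0$ isolated round vertices together with $t$ semilinear trees, the $j$-th of which has $s_j\ge1$ square vertices and $\rho_j$ round vertices, so $\sum_j s_j=q$ and $r_0+\sum_j\rho_j=r$. Because every square vertex must have a round immediate neighbor (Definition~\ref{defkforest}) and every round vertex has valency at most $1$, distinct square vertices are attached to disjoint sets of round vertices; hence $\rho_j\ge s_j$ for each $j$, giving $r\ge r_0+q\ge q$ and therefore $q\le\floor{n/k}$. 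A tree on $v$ vertices has $v-1$ edges, so $F$ has $\sum_j(s_j+\rho_j-1)=q+(r-r_0)-t$ edges, and using the degree conventions of Definition~\ref{defkforest},
\[
\deg F=q\,d(k-2)+\bigl(q+r-r_0-t\bigr)(d-1)=q(k-2)+(d-1)(n-r_0-t),
\]
the last equality obtained by substituting $r=n-(k-1)q$ and simplifying.

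The maximization is now transparent. Since $d\ge2$ and $k\ge3$, both $d-1$ and $k-2$ are positive; and $r_0\ge0$ while $t\ge1$ (a degree-maximal $F$ has at least one square vertex, as an elementary $k$-forest already has positive degree). Hence $\deg F\le\floor{n/k}(k-2)+(d-1)(n-1)=D$, with equality if and only if $q=\floor{n/k}$, $r_0=0$ and $t=1$; that is, if and only if $F$ consists of a single semilinear tree with $\floor{n/k}$ square vertices, which then has $r=n-(k-1)\floor{n/k}$ round vertices. To see that this degree is actually attained, note $n-(k-1)\floor{n/k}\ge\floor{n/k}$ (equivalently $n\ge k\floor{n/k}$), so one may arrange $\floor{n/k}$ square vertices along a path and attach the $n-(k-1)\floor{n/k}$ round vertices so that each square vertex gets at least one; endowed with the canonical orientation described in Section~\ref{secciondescripciondecohomologia}, this is a basic $k$-forest, hence a non-zero element of $H^*(\mdkn)$ of degree $D$ by Theorem~\ref{sum}. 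Therefore $D$ is the maximal dimension and $H^{D}(\mdkn)$ is spanned by the stated basic $k$-forests.

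I do not expect a genuine obstacle here; the only delicate point is the algebraic simplification that rewrites $q\,d(k-2)+(\#\text{edges})(d-1)$ as $q(k-2)+(d-1)(n-r_0-t)$, since it is precisely this form that decouples the three quantities $q$, $r_0$, $t$ and forces the extremal configuration. A secondary care is to derive the bound $q\le\floor{n/k}$ from the valency and square-vertex axioms directly, so that the argument does not implicitly presuppose the ``basic'' normal form.
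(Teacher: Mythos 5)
Your argument is correct, and it takes a genuinely different route from the paper's. The paper proceeds by local exchange moves on a hypothetical degree-maximal basic forest: isolated round vertices can be attached (raising the degree), distinct components can be concatenated by an edge (raising the degree), and a detach-and-reassemble move on $k$ excess round vertices shows $\sum_i(b_i-1)<k$, which combined with $n=m(k-1)+\sum_i b_i$ forces the number of square vertices to be $\floor{\frac{n}{k}}$; it also first reduces to $\mathbb{Z}_2$ coefficients to dispense with orientations, and it never computes the top degree inside the lemma. You instead derive the closed formula $\deg F=q(k-2)+(d-1)(n-r_0-t)$ and the combinatorial bound $q\le\floor{\frac{n}{k}}$ straight from the valency axioms (round vertices have valency at most $1$ and every square vertex has a round neighbor), after which the three parameters decouple and the extremal configuration $q=\floor{\frac{n}{k}}$, $r_0=0$, $t=1$ is forced; the trade-off is that you must exhibit a realizing basic forest (your labelling sketch does work, e.g.\ put $\{(i-1)k+1,\dots,(i-1)k+k-1\}$ in $A_i$ with round vertex $ik$ attached and the leftover integers attached to $A_m$), whereas the paper's maximality-contradiction argument needs no explicit realization. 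In compensation, your computation yields the exact top dimension $\floor{\frac{n}{k}}(k-2)+(d-1)(n-1)$ as a by-product, which is precisely the content of Corollary~\ref{corolarioamiversion} (the two expressions agree after substituting $n=mk+b$ and $a=d(k-1)-1$), and it works directly over $\mathbb{Z}$ since degrees are orientation-independent. Your only hand-waved points, the realizability of the labelling constraints for a basic forest and the implicit assumption $n\ge k$ guaranteeing a square vertex exists (the paper's standing hypothesis $n>k\ge3$), are minor and easily filled.
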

\begin{proof}
The map $H^\ast\left(\mdkn;\mathbb{Z}\right)\to H^\ast \left(\mdkn;\mathbb{Z}_2\right)$ induced by mod-2 reduction of coefficients yields an isomorphism after tensoring with $\mathbb{Z}_2$, so that it suffices to prove this lemma for $\mathbb{Z}_2$ coefficients. Consequently, we can ignore all orientation and sign conventions. In addition, it suffices to check the stated characterization for basic $k$-forests of maximal dimension. We start by noticing that such a basic $k$-forest $f$ cannot have isolated round vertices (for any such vertex can be attached to some square vertex of $f$ to produce a basic $k$-forest of larger dimension), and must have a single semilinear tree component (otherwise a basis element of larger dimension can be constructed by adding edges that concatenate the components of $f$). Let $A_1,\ldots,A_m$ denote the square vertices of $f$, and $b_i$ stand for the number of round vertices attached to~$A_i$, so that
\begin{equation}\label{division}
n=m(k-1)+\sum_{i=1}^{\blue{m}}b_i
\end{equation}
as there are no isolated round vertices. We claim that
\begin{equation}\label{sumita}
0\leq\sum_{i=1}^{\blue{m}}(b_i-1)<k.
\end{equation}
The first inequality is obvious as each $b_i$ is positive. If the second inequality fails, then $k$ of the round vertices, except for the greatest round vertex attached to each square vertex, can be detached from its corresponding $A_i$, to yield a smaller-dimensional $k$-tree $f'$. The integers corresponding to the detached round vertices can then be assembled into a new elementary basic $k$-forest that can further be concatenated to $f'$ to yield a basic $k$-forest $f''$. By construction, $\deg(f')=\deg(f)-k\blue{(d-1)}$, while $\deg(f'')=\deg(f')+d(k-\blue{2})+2(d-1)$, which yields $\deg(f'')>\deg(f)$, as \blue{$k\geq 3$}, contradicting the maximality of $f$. This proves~(\ref{sumita}). The conclusion of the lemma now follows from~(\ref{division}) and~(\ref{sumita}): $n=mk+b$, where $b:=\sum_{i=1}^{\blue{m}}(b_i-1)$ is in fact the residue in the division of $n$ by $k$ (so that $m=\floor{\frac{n}{k}}$).
\end{proof}

\begin{corollary}\label{corolarioamiversion}
Let $n,k,d$ be as in Theorem~\ref{sum}. The largest (respectively lowest) positive dimension where the cohomology of $\mdkn$ is non-zero equals $ma+(d-1)(m+b-1)$ (respectively $a$), where $m=\floor{\frac{n}{k}}$, $a=d(k-1)-1$ and $b=n-mk$ (so that $\,0\leq b<k$).
\end{corollary}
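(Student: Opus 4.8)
The plan is to derive both halves of the statement from Lemma~\ref{miversion} together with a bookkeeping count of the degree of a $k$-forest; only the assertion about the lowest dimension needs an additional, elementary minimality argument. We may assume $n\geq k$, since for $n<k$ the manifold $\mdkn$ is contractible and there is no positive-dimensional cohomology to discuss. By Theorem~\ref{sum} the graded group $H^\ast(\mdkn)$ has a basis of basic oriented $k$-forests, and by Definition~\ref{defkforest} the degree of such a forest is
\[
d(k-2)\cdot\#\{\text{square vertices}\}+(d-1)\cdot\#\{\text{edges}\}.
\]
Thus the nonzero positive-dimensional cohomology occurs precisely in the degrees realized by positive-degree basic $k$-forests, and it suffices to compute the maximum and minimum of this finite set of degrees.

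For the largest dimension I would invoke Lemma~\ref{miversion}: a class of maximal degree is a sum of basic $k$-forests, each a single semilinear tree with $m=\floor{\frac{n}{k}}$ square vertices and $n-(k-1)m$ round vertices. Writing $b=n-mk$ (so $0\le b<k$) gives $n-(k-1)m=m+b$ round vertices. In a semilinear tree the square vertices form a path, contributing $m-1$ edges, while each of the $m+b$ round vertices is joined to a square vertex by exactly one edge; hence there are $(m-1)+(m+b)=2m+b-1$ edges, and the degree is $m\,d(k-2)+(2m+b-1)(d-1)$. Using $a=d(k-1)-1$, equivalently $d(k-2)=a-(d-1)$, this rearranges in one line to $ma+(d-1)(m+b-1)$. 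Since for $n\geq k$ such a semilinear $k$-forest plainly exists, this is the top nonzero degree.

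For the lowest positive dimension I would show that every positive-degree basic $k$-forest has degree at least $a$, with equality attained. A basic $k$-forest of positive degree has at least one component that is a semilinear tree (isolated round vertices have degree $0$), and a semilinear tree with $t\geq1$ square vertices has at least $(t-1)+t=2t-1$ edges, because the square vertices form a path and, by the defining requirements of a basic $k$-forest, each square vertex has at least one round vertex, hence at least one incident edge. Its degree is therefore at least $t\,d(k-2)+(2t-1)(d-1)$, and subtracting the $t=1$ value $d(k-2)+(d-1)=d(k-1)-1=a$ leaves $(t-1)\bigl(d(k-2)+2(d-1)\bigr)\geq0$, since $k\geq3$ and $d\geq2$. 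So every positive-degree basic $k$-forest has degree $\geq a$, and equality is realized by the elementary $k$-forest consisting of a single square vertex with one round vertex attached, the remaining $n-k$ integers sitting in isolated round vertices; this exists precisely because $n\geq k$. Hence $a$ is the smallest positive degree in which $H^\ast(\mdkn)$ is non-zero.

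The only point requiring care is keeping the edge count in a semilinear tree straight and matching it algebraically with $ma+(d-1)(m+b-1)$; the lower bound for the bottom dimension is forced immediately by the structural constraints on square vertices in Definition~\ref{defkforest}, so no genuine obstacle is anticipated.
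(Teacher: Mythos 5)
Your proposal is correct and follows essentially the same route as the paper, which likewise deduces the top dimension from Lemma~\ref{miversion} and the bottom dimension from the minimal-dimension elementary forests of Remark~\ref{decomposition}. You merely make explicit the edge-count/degree arithmetic and the minimality argument that the paper leaves implicit.
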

\begin{proof}
The first observation in Remark~\ref{decomposition} below yields the assertion about the bottom non-trivial dimension.
Lemma~\ref{miversion} yields the assertion about the top non-trivial dimension.
\end{proof}

\begin{remark}\label{decomposition}
As illustrated in Example~\ref{exampleYuzvinsky}, any subset of $\nn$ with $k$ elements determines (up to a sign) a cohomology class of minimal dimension (i.e. dimension $a$ in the notation of Corollary~\ref{corolarioamiversion}): an elementary $k$-forest with a single attached round vertex (and some prescribed orientations). More generally, choosing $mk$ elements of $\nn$, and partitioning these elements into $m$ subsets of cardinality $k$, say $P_1 \sqcup P_2 \sqcup \cdots \sqcup P_m$, we can form a basic $k$-forest of dimension $ma$ which, in addition, factors (up to a sign) as a product of $m$ elementary minimal-dimension $k$-forests, namely those determined by each $P_i$. This observation will be the basis to construct, in the next section, a number of relevant cohomology classes in cartesian products of $\mdkn$.
\end{remark}

Throughout the rest of the paper we make free use of the description of the cohomology ring $H^*(\mdkn)$ (either with coefficients in $\mathbb{Z}$ or $\mathbb{Z}_2$) in terms of $k$-forests, and its properties. Also, as explained in the introduction, the conditions $d\geq2$ and $n>k\geq3$ will be in force.

\section{Cup-length and zero-divisors cup-lenght}

In this section we combine information coming from the connectivity, the homotopy dimension, and the cohomology ring of $\mdkn$ in order to estimate the Lusternik-Schnirelmann category (cat) and topological complexity (TC) of \mdkn. Cohomology coefficientes will be taken in $R$, where either $R=\mathbb{Z}$ or $R=\mathbb{Z}_2$. Assertions made without specifying the ring $R$ are meant to hold for both options of $R$.

\begin{definition}
Given a path-connected space $X$: 
\begin{itemize}
\item The \emph{cup-length of $X$,} $\text{cl}(X)$, is the maximal integer $\ell$ such that there exist cohomology classes $u_1,\dots, u_{\ell} \in \widetilde{H}^\ast(X)$ with non-trivial product $u_1 \cdots  u_{\ell}$.
\item The \emph{zero-divisor cup-length of $X$,} $\text{zcl}(X)$, is the maximal integer $\ell$ such that there exist cohomology classes $z_1,\ldots,z_\ell \in H^\ast (X\times X)$, each with trivial restriction under the diagonal inclusion $\Delta:X \hookrightarrow X\times X$, and so that the product $z_1\cdots z_\ell$ is non-zero. Each such cohomology class $z_i$ is called a zero-divisor for $X$.
\item More generally, for $s\geq2$, the $s$-th zero-divisor cup-length of $X$, $\text{zcl}_s(X)$, is the maximal integer $\ell$ such that there exist cohomology classes $z_1,\ldots,z_\ell \in H^\ast (X^s)$ with trivial restriction under the diagonal inclusion $\Delta: X \hookrightarrow X^s$, and so that the product $z_1\cdots z_\ell$ is non-zero. Each of these classes $z_i$ is called a $s$-th zero-divisor for $X$.
\end{itemize}
\end{definition}

Although both cl$(X)$ and zcl$_s(X)$ can be defined in a more general setting (using cohomology with local coefficients), here we only need to make use of $R$ (untwisted) coefficients. As in Theorem~\ref{omnibus}, it will be convenient to set $\text{zcl}_1=\text{cl}$ and $\text{zcl}_2=\text{zcl}$. If the ring under consideration needs to be specified we will use the more explicit notation $\text{zcl}_s
^R$ (or $\text{zcl}^R$, $\text{cl}^R$).

\begin{lemma}\label{lengths} For $s\geq1$, $\mathrm{zcl}_s(\mdkn)=s\floor{\frac{n}{k}}$.\end{lemma}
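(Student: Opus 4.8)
The plan is to prove the two inequalities $\mathrm{zcl}_s(\mdkn)\le s\floor{\frac{n}{k}}$ and $\mathrm{zcl}_s(\mdkn)\ge s\floor{\frac{n}{k}}$ separately, and in fact it suffices to argue with $\mathbb{Z}_2$ coefficients for the lower bound (since that only helps produce non-zero products) and to be slightly careful about which coefficient ring maximizes the length for the upper bound. Write $m=\floor{\frac{n}{k}}$. For the upper bound, the key dimensional input is Corollary~\ref{corolarioamiversion}: the cohomology $\widetilde H^*(\mdkn)$ is concentrated in degrees between $a=d(k-1)-1$ and $N:=ma+(d-1)(m+b-1)$, where $b=n-mk$. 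A product of $\ell$ positive-degree classes in $X^s$ that are zero-divisors lands in $H^*(X^s)\cong H^*(X)^{\otimes s}$; by the Künneth theorem a non-zero such product, after expanding, must contain a non-zero tensor-factorwise product, so in at least one of the $s$ tensor slots we get a non-zero product of at least $\lceil \ell/s\rceil$ positive-degree classes of $\mdkn$ — wait, more carefully: each zero-divisor has the form $1\otimes\cdots\otimes u\otimes\cdots - \cdots$ or a sum of decomposables, and the standard argument (as in the $\TC_s$ literature, e.g.\ Farber–Grant–Lupton–Oprea) shows a non-zero product of $\ell$ zero-divisors forces, in some slot, a non-zero product of positive-degree classes whose total count, summed over all $s$ slots, is at least $\ell$. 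Hence $\ell\le s\cdot\mathrm{cl}(\mdkn)$, and it remains to show $\mathrm{cl}(\mdkn)=m$. The inequality $\mathrm{cl}(\mdkn)\le m$ follows because a product of $\ell$ positive classes has degree $\ge \ell a$, and by Remark~\ref{decomposition}/Lemma~\ref{miversion} the top degree is $N< (m+1)a$ (one checks $N=ma+(d-1)(m+b-1)$ and $a=d(k-1)-1\ge 2(k-1)-1\ge (d-1)(m+b-1)$ fails in general — so instead the right argument is: a non-trivially-multiplying family of $\ell$ elementary-or-higher classes, by Theorem~\ref{prod}, superposes to a basic $k$-forest, and a basic $k$-forest has at most $m=\floor{\frac{n}{k}}$ square vertices by Lemma~\ref{miversion}'s counting; each positive-degree factor contributes at least one square vertex to the superposition, so $\ell\le m$).

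For the lower bound $\mathrm{zcl}_s(\mdkn)\ge sm$, I would explicitly exhibit $sm$ zero-divisors whose product is non-zero. Following the idea flagged in Remark~\ref{decomposition}, partition a size-$mk$ subset of $\nn$ into blocks $P_1\sqcup\cdots\sqcup P_m$, each of cardinality $k$, and for each block $P_i$ let $v_i\in\widetilde H^a(\mdkn)$ be the elementary minimal-dimension $k$-forest it determines (a single square vertex holding $k-1$ of the elements of $P_i$, with the remaining element in an attached round vertex). For each slot $t\in\{1,\dots,s\}$ and each block $i\in\{1,\dots,m\}$, set
\[
z_{t,i}\;=\;1\otimes\cdots\otimes v_i\otimes\cdots\otimes 1 \;-\; 1\otimes\cdots\otimes 1\otimes v_i \ \in H^*(\mdkn^{\,s+1})\ \text{ restricted appropriately},
\]
i.e.\ the standard zero-divisor $v_i$ placed in slot $t$ minus $v_i$ placed in slot $s+1$ (using the convention $\TC_s$ is read off from the fibration $\mdkn^{s}\to\mdkn^{s}$ — adjust indices to the paper's convention). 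Each $z_{t,i}$ restricts trivially to the diagonal, so is a genuine $s$-th zero-divisor. The product $\prod_{t,i} z_{t,i}$ expands into a sum of tensor monomials; the crucial claim is that the monomial $v_1\cdots v_m\otimes v_1\cdots v_m\otimes\cdots\otimes v_1\cdots v_m$ (the "diagonal-free" term, $m$ factors in each of the first $s$ slots) appears with coefficient $\pm1$ and cannot be cancelled by any other term. Here Remark~\ref{decomposition} is exactly what makes $v_1\cdots v_m\neq 0$ in $H^*(\mdkn)$: the $v_i$ have pairwise disjoint square vertices (disjoint blocks), so none of the vanishing conditions (1)–(3) of Theorem~\ref{prod} is triggered, and their superposition is the basic $k$-forest with $m$ square vertices of Lemma~\ref{miversion}, which is a basis element, hence non-zero. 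Counting: we have used $sm$ zero-divisors, giving $\mathrm{zcl}_s\ge sm$.

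The main obstacle I anticipate is the non-cancellation bookkeeping in the lower-bound step: when $\prod_{t,i} z_{t,i}$ is expanded, many monomials appear, and one must verify that no other monomial equals $v_1\cdots v_m\otimes\cdots\otimes v_1\cdots v_m$ after using the relations of Theorems~\ref{sum} and~\ref{prod}. The cleanest way around this is to pass to $\mathbb{Z}_2$ coefficients (so signs disappear) and to exploit a grading/support argument: the target monomial is the unique expanded term in which all of the first $s$ slots carry the full product $v_1\cdots v_m$ of top "block-degree," and any other term has strictly fewer $v_i$'s in some slot or a $v_i$ pushed into slot $s+1$, so it lives in a different Künneth summand and cannot cancel. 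A secondary point requiring care is the precise form of the upper-bound Künneth argument — making rigorous the passage from "a non-zero product of $\ell$ zero-divisors in $H^*(X^s)$" to "$\ell\le s\,\mathrm{cl}(X)$" — but this is by now a standard manipulation (cf.\ the proof that $\mathrm{zcl}_s(X)\le s\,\mathrm{cl}(X)$ for any space), and combined with $\mathrm{cl}(\mdkn)=m$ it closes the argument. Putting the two bounds together yields $\mathrm{zcl}_s(\mdkn)=s\floor{\frac{n}{k}}$ for all $s\ge1$.
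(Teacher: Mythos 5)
Your upper-bound argument (a nonzero product of $\ell$ zero-divisors forces, after K\"unneth expansion, more than $\mathrm{cl}(\mdkn)$ positive-degree factors in some tensor slot unless $\ell\le s\cdot\mathrm{cl}(\mdkn)$, combined with $\mathrm{cl}(\mdkn)\le\floor{\frac{n}{k}}$ because a $k$-forest with $\floor{\frac{n}{k}}+1$ square vertices would need at least $(\floor{\frac{n}{k}}+1)k>n$ labels) is sound and is essentially the paper's argument. The genuine gap is in the lower bound for $s\ge2$. As written, your classes $z_{t,i}$ live in $H^*(\mdkn^{\,s+1})$; there the ``diagonal-free monomial'' argument does work, but it proves $\mathrm{zcl}_{s+1}\ge sm$, which in the paper's convention (zero-divisors lie in $H^*(\mdkn^{\,s})$, i.e.\ $s$ slots, kernel of $\Delta^*$) is the desired statement with a deficit of $m$. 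Moreover the construction cannot be repaired by reindexing, because you build all $s$ zero-divisors of the $i$-th block from the single class $v_i$, and $v_i^2=0$ (its square vertex meets itself; condition (1) of Theorem~\ref{prod}). With only $s$ slots, any $s$ zero-divisors of the form $\sum_t c_t\,(1\otimes\cdots\otimes v_i\otimes\cdots\otimes 1)$ with $\sum_t c_t=0$ have vanishing product over $\mathbb{Z}_2$: since $v_i^2=0$, only slot-injective monomials survive, each equal to $v_i\otimes\cdots\otimes v_i$, and its total coefficient is the permanent $=$ determinant mod $2$ of the $s\times s$ coefficient matrix, whose rows sum to zero. Concretely, for $s=2$, $(1\otimes v_i+v_i\otimes1)^2=1\otimes v_i^2+v_i^2\otimes1=0$. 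So one class per block yields at most $(s-1)m$ usable factors, short of the required $sm$.

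The missing idea---the heart of the paper's proof---is to use \emph{two distinct} degree-$a$ generators per block whose square vertices overlap: $x_{ik}$ (square vertex $\{ik-k+1,\dots,ik-1\}$, round vertex $ik$) together with $x_{ik+1}$ (square vertex $\{ik-k+2,\dots,ik\}$, round vertex $ik+1$), with $\widetilde{x}_{mk}$ playing this role in the last block. Since the square vertices intersect, the in-slot products $x_{ik}x_{ik+1}$ and $x_{mk}\widetilde{x}_{mk}$ vanish, so in $y_{i,1}y_{i,2}$ only the cross terms survive, and the full product $\prod_{i}\prod_{j}z_{i,j}$ collapses to the sum of tensor products of the two distinct basis forests \eqref{xis} and \eqref{xtildeis}; being distinct basis elements, they cannot cancel, which is exactly what buys the extra $m$ zero-divisors and gives $sm$. (A minor additional point: to get the integral case the paper does not merely ``pass to $\mathbb{Z}_2$''; it uses that mod-$2$ reduction is a ring epimorphism on torsion-free cohomology to transport the computation to integral zero-divisors of the form $1\otimes z-z\otimes1$.) With this modification---a second, overlapping generator per block---your outline becomes the paper's proof.
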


\begin{proof}
Recall we assume $k<n$, in particular $m:=\floor{\frac{n}{k}}\geq1$. We start working with $\mathbb{Z}_2$-coefficients. For $k\leq i\leq n$, let $x_i$ be the elementary $k$-forest \begin{equation*}\tikz[baseline={([yshift=3pt]current bounding box.south)}]{\node (A) [rectangle, draw, minimum width=8mm]{$\scriptstyle A$};\node (B) [circle, draw, inner sep=0.5pt, minimum width=4.5mm] at ([shift={(3.5mm,3.5mm)}]A.north east){$\scriptstyle B$};\draw[line width=0.5pt] (A.north east)--(B.235);}\end{equation*} where $A=\{i-k+1,\, i-k+2,\, \dots, i-1\}$, $B=i$ and the remaining indices of $\nn$ lie on isolated round vertices. Similarly, let $\widetilde{x}_i$ be the elementary $k$-forest as above where now $A=\{1,\, i-k+2,\, \dots,\, i-1 \}$, $B=i$. Note that $x_k=\widetilde{x}_k$, however $x_i$ and $\widetilde{x}_i$ are different basis elements for $i>k$. Furthermore, by Remark~\ref{decomposition}, the products
\begin{gather}
x_k \cdot x_{2k} \cdot \cdots \cdot x_{(m-1)k} \cdot x_{mk} \label{xis}\\
x_{k+1} \cdot x_{2k+1} \cdot \cdots \cdot x_{(m-1)k+1} \cdot \widetilde{x}_{mk} \label{xtildeis}
\end{gather}
are basic $k$-forests and thus non-zero. This yields in particular the inequality 
\begin{equation}\label{desigualdadparacategoria}
\floor{\frac{n}{k}}\leq \text{cl}^{\mathbb{Z}_2}(\mdkn).
\end{equation}
Note that the elements in~(\ref{xis}) and~(\ref{xtildeis}) are different basic $k$-forests except for $m=1$, in which case both coincide with $\widetilde{x}_k = x_k$. 

Since the mod-$2$ reduction map $\mathbb{Z}\to \mathbb{Z}_2$ induces a ring epimorphism, we have $\text{cl}^{\mathbb{Z}_2}(\mdkn) \leq \text{cl}^{\mathbb{Z}}(\mdkn)$. Therefore, it suffices to show $$\text{cl}^{\mathbb{Z}}(\mdkn)\leq \floor{\frac{n}{k}}.$$ Thus, we now switch to $\mathbb{Z}$-coefficients, noticing that it suffices to show that the product of any set of $m+1$ elementary oriented $k$-forests vanishes. In turn it suffices to show that there are no oriented $k$-forests with $m+1$ square vertices. But any such $k$-forest would have, in addition to the integers inside the $m+1$ square vertices, at least one integer attached to each square vertex, making a total of at least $(m+1)k$ integers inside $\nn$. This is impossible for $m=\floor{\frac{n}{k}}$.

Next we bound from below the zero-divisors cup-length zcl$_2(\mdkn)$. Working again with $\mathbb{Z}_2$-coefficients, we can consider the zero-divisors in $H^\ast(\mdkn)\otimes H^\ast(\mdkn)$ given as
\begin{align*}
y_{i,1} &= 1 \otimes x_{ik+1} + x_{ik+1}\otimes 1, \enskip \text{ for } 1\leq i<m, \\
y_{m,1} &= \begin{cases} 1\otimes x_{k+1}+x_{k+1}\otimes 1, \enskip\text{if }m=1 \mbox{ (recall $k<n$)}; \\
1\otimes \widetilde{x}_{mk} +\widetilde{x}_{mk}\otimes 1, \enskip \text{if } m>1,
\end{cases}\\
y_{i,2} &= 1 \otimes x_{ik} + x_{ik} \otimes 1, \enskip\text{ for }1\leq i\leq m.
\end{align*}
If $m=1$, $y_{1,1}y_{1,2} = x_{k+1}\otimes x_{k}+x_{k}\otimes x_{k+1}\neq0,$ showing $2m \leq \text{zcl}^{\mathbb{Z}_2}(\mdkn)$. For $m>1$ observe that the square vertex in $x_{ik}$ intersects the square vertex in $x_{ik+1}$ (as $k\geq3$) and so their product is zero. Consequently $y_{i,1}y_{i,2} = x_{ik} \otimes x_{ik+1} + x_{ik+1}\otimes x_{ik}$ for $i<m$. Likewise, $y_{m,1} y_{m,2} = \widetilde{x}_{mk}\otimes x_{mk} + x_{mk} \otimes \widetilde{x}_{mk}$. Note also that each product $x_{ik+1}x_{(i+1)k}$ vanishes (cf.~Example~\ref{ejemplito1}), as well as the product $x_k \widetilde{x}_{mk}$, so we have
\[
\prod_{i=1}^m y_{i,1}y_{i,2} = \left(\left(\prod_{i=1}^{m-1} x_{ik+1}\right)  \widetilde{x}_{mk}\right) \otimes \prod_{i=1}^m x_{ik}+\prod_{i=1}^m x_{ik}\otimes \left(\left(\prod_{i=1}^{m-1} x_{ik+1}\right)  \widetilde{x}_{mk}\right),
\]
which is the (symmetric) sum of the tensor product of the basis elements \eqref{xis} and \eqref{xtildeis}. This gives again $2m \leq \text{zcl}^{\mathbb{Z}_2}(\mdkn)$. Furthermore, the surjectivity argument used in the case of cup-length allows us to assemble $\mathbb{Z}$-zero-divisors (of the form $1\otimes z -z\otimes 1$, rather than $1\otimes z + z \otimes 1$) giving $2m\leq \text{zcl}^{\mathbb{Z}}\left(\mdkn\right)$.

The fact that the latter inequality is sharp (with either $\mathbb{Z}$ or $\mathbb{Z}_2$ coefficients) will follow once we observe that, actually, the product of any $2m+1$ positive-dimensional basic tensors $b_i=u_i \otimes v_i$ in $H^\ast(\mdkn) \otimes H^\ast(\mdkn)$ vanishes. In the product
\[
\prod_{i=1}^{2m+1} b_i = (u_1  \cdots  u_{2m+1}) \otimes (v_1  \cdots  v_{2m+1})
\]
one of the factors $u_1  \cdots  u_{2m+1}$ or $v_1  \cdots  v_{2m+1}$ vanishes as it is the product of at least $m+1$ positive-dimensional cohomology classes.

These arguments generalize easily to yield zcl$_s^R(\mdkn)=sm$. Working with $R=\mathbb{Z}_2$, consider the $s$-th zero divisors in $H^\ast(\mdkn)^{\otimes s}$:
\begin{align*}
z_{i,1} &= 1\otimes x_{ik+1} \otimes 1 \otimes \cdots \otimes 1 + x_{ik+1}\otimes 1 \otimes \cdots \otimes 1, \enskip \text{for }1\leq i<m, \\
z_{m,1} &= \begin{cases} 1 \otimes {x}_{k+1} \otimes 1\otimes \cdots \otimes 1 + {x}_{k+1} \otimes 1\otimes\cdots \otimes 1,\enskip \text{for } m=1; \\
1 \otimes \widetilde{x}_{mk} \otimes 1\otimes \cdots \otimes 1 + \widetilde{x}_{mk} \otimes 1\otimes\cdots \otimes 1,\enskip\text{for }m>1, 
\end{cases}\\
z_{i,j} &= 1 \otimes \cdots \otimes 1 \otimes \underbrace{ x_{ik}}_{j-\text{th}} \otimes 1 \otimes \cdots \otimes 1 + x_{ik} \otimes 1 \otimes \cdots \otimes 1, \enskip\text{for } 1\leq i\leq m \text{ and } 2\leq j \leq s.
\end{align*}
Direct calculation yields $\prod_{i=1}^m \prod_{j=1}^s z_{i,j} \neq 0$. For instance, if $m>1$, we have
\begin{align*}
\prod_{j=1}^s z_{i,j} &= x_{ik+1} \otimes x_{ik} \otimes x_{ik} \otimes \cdots \otimes x_{ik} + x_{ik} \otimes x_{ik+1} \otimes x_{ik} \otimes \cdots \otimes x_{ik}\\
&= (y_{i,1} y_{i,2}) \otimes x_{ik} \otimes \cdots \otimes x_{ik},
\intertext{for $i<m$, and }
\prod_{j=1}^s z_{m,j} &= \widetilde{x}_{mk} \otimes x_{mk} \otimes \cdots \otimes x_{mk}+x_{mk} \otimes\widetilde{x}_{mk} \otimes x_{mk}\otimes  \cdots \otimes x_{mk}\\
&= (y_{m,1} y_{m,2})\otimes x_{mk} \otimes \cdots \otimes x_{mk}.
\end{align*}
So
\[
\prod_{i=1}^m \prod_{j=1}^s z_{i,j} = \left(\prod_{i=1}^{m} y_{i,1} y_{i,2} \right) \otimes \prod_{i=1}^m x_{ik} \otimes \cdots \otimes \prod_{i=1}^m x_{ik} \neq 0.
\]
Therefore $sm\leq\text{zcl}_s^R(\mdkn)$ for $R=\mathbb{Z}_2$ and, as above, for $\mathbb{R}=\mathbb{Z}$. The latter inequality is sharp by considerations similar to those in the case $s=2$.
\end{proof}

Next we make use of these bounds to estimate the category and all topological complexities of \mdkn.

\begin{theorem}\label{cat} For $s\geq 1$, \emph{TC}$_s\left(\mdkn\right)$  is bounded by
\[
s\floor{\dfrac{n}{k}} \leq \mathrm{TC}_s\left(\mdkn\right) \leq s\left(\floor{\dfrac{n}{k}}+\floor{\dfrac{\left(\floor{\frac{n}{k}}+b-1\right)(d-1)}{a}}\right),
\]
where $a=d(k-1)-1$ and $n=k\floor{\frac{n}{k}}+b$ with $0 \leq b < k$.
\end{theorem}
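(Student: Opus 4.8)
The plan is to sandwich $\TC_s(\mdkn)$ between a cohomological lower bound and a dimension-connectivity upper bound, both of which are completely standard once the relevant invariants of $\mdkn$ are known.

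For the lower bound, I would invoke the classical inequality $\mathrm{zcl}_s(X)\leq\TC_s(X)$ (valid for untwisted coefficients), together with Lemma~\ref{lengths}, which gives $\mathrm{zcl}_s(\mdkn)=s\floor{\frac nk}$. This immediately yields $s\floor{\frac nk}\leq\TC_s(\mdkn)$. No further work is needed here.

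For the upper bound, the key inputs are: (i) the general estimate $\TC_s(X)\leq\dfrac{s\cdot\operatorname{hdim}(X)}{\operatorname{conn}(X)+1}$ (Farber--Grant--type bound, really a consequence of the fact that an $s$-fold fibrewise join of a fibration with highly connected fibre of bounded dimension admits a section over a low-dimensional CW base); (ii) the homotopy dimension of $\mdkn$, which by Corollary~\ref{corolarioamiversion} is at most $ma+(d-1)(m+b-1)$ — here one should note that $\mdkn$ is an open subset of $(\mathbb R^d)^n$, hence homotopy equivalent to a CW complex of dimension equal to its top nonvanishing cohomological degree (being a smooth open manifold, it has the homotopy type of a CW complex of dimension $\leq$ its cohomological dimension, and in fact one can argue via the Morse-theoretic / handle structure or simply cite that for subspace-arrangement complements the homotopy dimension equals the top cohomology degree); and (iii) the connectivity of $\mdkn$, which by the same corollary is $a-1=d(k-1)-2$, since the lowest positive-dimensional cohomology sits in degree $a$ and $\mdkn$ is simply connected (as $d\geq 2$, $k\geq 3$, so $a=d(k-1)-1\geq 3$). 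Plugging these into (i) gives
\[
\TC_s(\mdkn)\;\leq\;\floor{\frac{s\bigl(ma+(d-1)(m+b-1)\bigr)}{a}}\;=\;sm+\floor{\frac{s(d-1)(m+b-1)}{a}},
\]
and one finishes by the elementary inequality $\floor{\frac{s(d-1)(m+b-1)}{a}}\leq s\floor{\frac{(d-1)(m+b-1)}{a}}$ (which holds because $\floor{sx}\leq s\floor{x}$ fails in general but $\floor{sx}\geq s\floor{x}$ always — so in fact one should be careful and instead of distributing the $s$ one may want to state the bound with $\floor{\frac{s(m+b-1)(d-1)}{a}}$; to match the statement exactly, use subadditivity of $\TC_s$ under the relevant cover decomposition, or simply observe $s\floor{x}\le\floor{sx}$ gives the displayed form directly since the claimed upper bound $s\floor{\frac nk}+s\floor{\frac{(m+b-1)(d-1)}{a}}$ dominates $\floor{\frac{s(ma+(d-1)(m+b-1))}{a}}$).

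The main obstacle is getting the arithmetic of the floor functions to land exactly on the stated right-hand side rather than on the slightly sharper (but uglier) $\floor{\frac{s(\operatorname{hdim})}{a}}$; this is purely bookkeeping. A secondary, more conceptual point that must be addressed cleanly is the justification that $\operatorname{hdim}(\mdkn)$ equals the top nonzero cohomological dimension from Corollary~\ref{corolarioamiversion}: this follows from the general fact that the complement of a diagonal subspace arrangement in $(\mathbb R^d)^n$ is a smooth open manifold and hence has the homotopy type of a finite CW complex whose dimension can be taken equal to its (co)homological dimension — one may cite the standard reference for homotopy types of subspace-arrangement complements, or note that $\mdkn$ deformation retracts onto a CW complex of the appropriate dimension. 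With those two points in hand, combining the lower bound from Lemma~\ref{lengths} via $\mathrm{zcl}_s\leq\TC_s$ and the upper bound via the connectivity-dimension estimate completes the proof.
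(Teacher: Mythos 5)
Your lower bound is exactly the paper's: $\mathrm{zcl}_s\leq\TC_s$ together with Lemma~\ref{lengths}, and nothing more is needed there. The genuine gap is in the upper bound. The route you choose, a direct dimension--connectivity estimate $\TC_s(X)\leq s\cdot\operatorname{hdim}(X)/(\operatorname{conn}(X)+1)$ applied to $X=\mdkn$, yields at best $\TC_s(\mdkn)\leq sm+\floor{\tfrac{s(d-1)(m+b-1)}{a}}$, and this does \emph{not} imply the stated bound $sm+s\floor{\tfrac{(d-1)(m+b-1)}{a}}$, which is the smaller of the two: since $s\floor{x}\leq\floor{sx}$ always, the theorem's bound is dominated by yours, not the other way around, so your closing parenthetical (that the claimed bound ``dominates'' $\floor{s\cdot\operatorname{hdim}/a}$) asserts the inequality in the wrong direction. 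Concretely, for $d=2$, $k=3$, $n=7$ one has $m=2$, $b=1$, $a=3$, so the theorem asserts $\TC_s\leq 2s$, while your estimate gives only $2s+\floor{2s/3}$, strictly weaker for every $s\geq 2$. The alternative you gesture at (``subadditivity of $\TC_s$ under the relevant cover decomposition'') is not formulated precisely enough to close this gap.

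The paper avoids the problem by ordering the two standard estimates differently: first bound the category, $\cat(\mdkn)\leq\operatorname{hdim}(\mdkn)/(\operatorname{conn}(\mdkn)+1)=m+\tfrac{(d-1)(m+b-1)}{a}$, so that, $\cat$ being an integer, $\cat(\mdkn)\leq m+\floor{\tfrac{(d-1)(m+b-1)}{a}}$; only then apply $\TC_s\leq s\cdot\cat$. Taking the floor \emph{before} multiplying by $s$ is precisely what produces the stated right-hand side, and this is the fix you need. Two of your supporting claims should also be tightened along the paper's lines: the equality $\operatorname{hdim}(\mdkn)=ma+(d-1)(m+b-1)$ is not a formal consequence of $\mdkn$ being a smooth open manifold; the paper uses simple connectivity \cite{Kallel2016}, torsion-freeness of the homology \cite{dobri2015}, and \cite[Proposition 4C.1]{MR1867354} to realize the homotopy type by a CW complex of that dimension, and the same inputs (with Hurewicz and universal coefficients) justify $\operatorname{conn}(\mdkn)+1=a$.
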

\begin{proof} The lower bound follows from Lemma~\ref{lengths} and the standard fact that $\text{zcl}_s\leq\text{TC}_s$. For $s=1$, the upper bound follows from the well known bound for the Lusternik-Schnirelmann category of a space in terms of its connectivity and homotopy dimension, namely
\[
\text{cat}(\mdkn) \leq \dfrac{\text{hdim}(\mdkn)}{\text{conn}(\mdkn)+1},
\]
where hdim$(\mdkn)$ (respectively $\text{conn}(\mdkn)$) is the cellular homotopy dimension (respectively, connectivity) of $\mdkn$. Indeed, under our general hypothesis ($d\geq 2$, $k \geq 3$), $\mdkn$ is simply connected \cite[Theorem 1.2]{Kallel2016} and has torsion-free $\mathbb{Z}$-homology \cite[Proposition 3.9]{dobri2015}, so $$\text{conn}(\mdkn)+1=a,$$ in view of Corollary~\ref{corolarioamiversion}, the Hurewicz Theorem and the universal coefficients theorem, whereas 
\begin{equation}\label{hdim}
\text{hdim}(\mdkn)=ma+(d-1)(m+b-1),
\end{equation}
in view of Corollary~\ref{corolarioamiversion} (so $m=\floor{\frac{n}{k}}$),~\cite[Proposition~4C.1]{MR1867354}, and the universal coefficient theorem. Lastly, for a general $s\geq 2$, the upper bound follows from the well known bound TC$_s\leq s\cdot $cat.
\end{proof}

\begin{corollary}\label{TCshypothesis} For $s\geq 1$, \emph{TC}$_s\left(\mdkn\right)=s\floor{\frac{n}{k}}$ provided $n-(k-1)\floor{\frac{n}{k}}<\frac{dk-2}{d-1}$. 
\begin{proof}
With the notation of Theorem~\ref{cat}, TC$_s\left(\mdkn\right)=sm$ if

\centerline{{\large\mbox{$\frac{(m+b-1)(d-1)}{a}$}}$<1$} 

or, equivalently, $m+b<\frac{dk-2}{d-1}$.
\end{proof}
\end{corollary}

\section{Obstruction theory}

Corollary~\ref{TCshypothesis} yields the case $n-(k-1)\floor{\frac{n}{k}}<\frac{dk-2}{d-1}$ in Theorem~\ref{omnibus}. In this section we address the remaining instances in Theorem~\ref{omnibus}, i.e., when the latter inequality is an equality. In fact, we improve by $s$ units the upper bound in Theorem~\ref{cat} for all cases where $(\floor{\frac{n}{k}}+b-1)(d-1)$ is divisible by $a$.

The following fact is standard, see for instance \cite[Theorem 3.1]{Gonzalez2015}.

\begin{theorem}\label{obstruccion} Let $p:E \to B$ be a fibration with fiber $F$ whose base $B$ is a CW complex. Assume $p$ admits a section $\phi$ over the $s$-skeleton $B^{(s)}$ of $B$ for some $s\geq 1$. If $F$ is $s$-simple and the obstruction cocycle to the extension of $\phi$ to $B^{(s+1)}$ lies in the cohomology class
\[
\eta\in H^{s+1}\big(B;\left\{\pi_s(F)\right\}\big),
\]
then $p(\ell)$ (the $(\ell+1)$-th fiberwise join power of $p$) admits a section over $B^{(s+1)(\ell+1)-1}$ whose obstruction cocycle to extending to $B^{(s+1)(\ell+1)}$ belongs to the cohomology class
\[
\eta^{\ell+1} \in H^{(s+1)(\ell+1)} \big(B;\{\pi_{s\ell+s+\ell}(F^{\ast (\ell+1)})\}\big).
\]
\end{theorem}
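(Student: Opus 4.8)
The plan is to reduce Theorem~\ref{obstruccion} to the classical obstruction-theoretic behavior of fiberwise joins under the Whitehead product / join pairing of homotopy groups, tracking how primary obstructions multiply. First I would recall the standard setup: given a fibration $p\colon E\to B$ with fiber $F$ and a section $\phi$ over $B^{(s)}$, the obstruction to extending $\phi$ over $B^{(s+1)}$ is a well-defined cocycle in $C^{s+1}(B;\{\pi_s(F)\})$ (using that $F$ is $s$-simple so that the coefficient system is untwisted, or at least well-defined), and if we only care about the existence of \emph{some} section over $B^{(s+1)}$ after possibly redefining $\phi$ on $B^{(s)}$, the relevant invariant is the cohomology class $\eta\in H^{s+1}(B;\{\pi_s(F)\})$. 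This is exactly the input hypothesis.

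Next I would invoke the description of the fiberwise join $p(\ell)\colon E^{*_B(\ell+1)}\to B$: its fiber is the iterated join $F^{*(\ell+1)}$, which is $(s\ell+s+\ell)$-connected when $F$ is $(s-1)$-connected (more precisely, the join of spaces that are each $s$-connected-through-$\pi_s$ raises connectivity additively: $\mathrm{conn}(X*Y)=\mathrm{conn}(X)+\mathrm{conn}(Y)+2$), so the first potentially nonzero homotopy group of $F^{*(\ell+1)}$ sits in degree $s\ell+s+\ell = (s+1)(\ell+1)-1$, and the first obstruction to a section of $p(\ell)$ lives in $H^{(s+1)(\ell+1)}(B;\{\pi_{(s+1)(\ell+1)-1}(F^{*(\ell+1)})\})$. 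Then the key algebraic fact is the computation of $\pi_{(s+1)(\ell+1)-1}(F^{*(\ell+1)})$ in terms of $\pi_s(F)$: the join $F^{*(\ell+1)}\simeq \Sigma^{\ell}\big(F^{\wedge(\ell+1)}\big)$ up to the relevant range, and in the bottom nonzero degree the Hurewicz/Künneth picture gives that this group receives a natural map from $\pi_s(F)^{\otimes(\ell+1)}$ (tensor over the ground ring of the coefficient system). Under this identification, the product in the cohomology $H^*(B;-)$ with the tensor-product pairing on coefficients carries $\eta^{\ell+1}$ to the first obstruction of $p(\ell)$; this is the heart of the statement. I would cite \cite[Theorem 3.1]{Gonzalez2015} (or the analogous statement there) for the precise naturality of this pairing, since the paper explicitly flags the result as standard and points to that reference.

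The argument then proceeds by induction on $\ell$: assuming $p(\ell-1)$ has a section over $B^{((s+1)\ell)-1}$ with obstruction $\eta^{\ell}$ to extending over $B^{(s+1)\ell}$, one writes $p(\ell)=p(\ell-1)*_B p$ (fiberwise join with one more copy of $p$) and uses the multiplicativity of obstructions under fiberwise join — i.e.\ a section of $p(\ell-1)$ over a skeleton together with a section of $p$ over a skeleton produce a section of the fiberwise join over the join of those skeleta, with obstruction the product of the two obstruction classes via the join pairing $\pi_{a}(F_1^{*\ell})\otimes\pi_s(F)\to\pi_{a+s+1}(F_1^{*\ell}*F)$. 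Collapsing the iterated tensor identifies the output with $\eta^{\ell+1}$ in the asserted degree and with the asserted coefficients $\pi_{s\ell+s+\ell}(F^{*(\ell+1)})$.

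The main obstacle I anticipate is the bookkeeping of \emph{which} section one has at each stage and over which skeleton, so that the join of sections is defined over exactly $B^{(s+1)(\ell+1)-1}$: one must be careful that the obstruction class $\eta$ is the obstruction to extending a section that \emph{already exists} over the $s$-skeleton and may need to be modified on cells of dimension $\le s$, and that in forming iterated fiberwise joins these modifications propagate compatibly; relatedly, one needs $F$ to be $s$-simple so that all the coefficient systems in sight are the expected (possibly local) systems $\{\pi_s(F)\}$ and the cup-product formula for $\eta^{\ell+1}$ makes sense with values in $\{\pi_{s\ell+s+\ell}(F^{*(\ell+1)})\}$. Since the paper labels this "standard'' and cites \cite[Theorem 3.1]{Gonzalez2015}, I would keep the proof short: set up the fiberwise-join connectivity count, state the join pairing on homotopy groups, and then deduce the multiplicativity of obstructions by the cited result, leaving the detailed cochain-level verification to the reference.
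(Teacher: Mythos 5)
The paper does not actually prove Theorem~\ref{obstruccion}: it quotes it as a standard fact and refers to \cite[Theorem 3.1]{Gonzalez2015} (ultimately Schwarz's classical theorem on fiberwise joins), so your proposal has to be measured against that classical argument rather than against anything in the text. Your core step --- induction on $\ell$, writing $p(\ell)$ as the fiberwise join of $p(\ell-1)$ with $p$, and using that sections over $B^{(a)}$ and $B^{(b)}$ with obstruction cocycles $c_1$, $c_2$ yield a section of the fiberwise join over $B^{(a+b+1)}$ whose obstruction cocycle is the cup product of $c_1$ and $c_2$ with respect to the join pairing $\pi_a(F_1)\otimes\pi_b(F_2)\to\pi_{a+b+1}(F_1\ast F_2)$ --- is exactly that argument, so the outline is correct, and deferring the cochain-level verification to the cited reference is no less complete than what the paper itself does. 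Two small repairs are worth making. First, the connectivity preamble is both unnecessary and internally off by one: the theorem as stated does not assume $F$ is $(s-1)$-connected (that hypothesis only enters later in the paper, to make the coefficient map (\ref{mapofcoef}) an isomorphism and to remove indeterminacy), and for an $(s-1)$-connected $F$ the join $F^{\ast(\ell+1)}$ is $\big((s+1)(\ell+1)-2\big)$-connected, i.e.\ its first possibly nonzero homotopy group sits in degree $s\ell+s+\ell$; calling it ``$(s\ell+s+\ell)$-connected'' contradicts the very next clause. The statement is a cocycle-level one, and the section of $p(\ell)$ over $B^{(s+1)(\ell+1)-1}$ must come from the join-of-sections construction, not from a connectivity count. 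Second, ``a section over the join of those skeleta'' should read ``a section over the $(a+b+1)$-skeleton of $B$'': the fiberwise join is a fibration over $B$ itself, and the whole point of the construction is that the skeletal ranges add with a shift of one. With those adjustments your sketch is the standard proof behind the citation.
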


In Theorem~\ref{obstruccion}, $\eta^{\ell+1}$ stands for the image of the $(\ell+1)$-fold cup-power of $\eta$ under the $\pi_1(B)$-homomorphism of coefficients
\begin{equation}\label{mapofcoef}
\pi_s(F)^{\otimes (\ell+1)}\to \pi_{s\ell+s+\ell}(F^{\ast (\ell+1)})
\end{equation}
given by iterated join of homotopy classes. We use Theorem~\ref{obstruccion} when $B$ is simply connected, so that all cohomology groups above have trivial systems of coefficients, and when $F$ is $(s-1)$-connected, so that~(\ref{mapofcoef}) is an isomorphism, and $\eta^{\ell+1}$ is really the $(\ell+1)$-st cup-power of $\eta$. In addition, our connectivity hypothesis on $F$ implies that $\eta$ and $\eta^{\ell+1}$ are the primary obstructions for sectioning $p$ and $p(\ell)$, respectively, and thus they are well defined (no indeterminacy). Lastly, since the pull-back $p^\ast(p)$ admits a tautological section, we have $p^\ast(\eta)=0$ \emph{a fortiori}.

As in the previous section, we denote by $m$ and $b$ the quotient and remainder, respectively, of the division of $n$ by $k$. The role of $p$ in Theorem~\ref{obstruccion} will be played by the based path-space fibration 
\begin{equation}\label{bpfib}
\Omega \mdkn \to P_0(\mdkn) \overset{e_1}{\to} \mdkn.
\end{equation}
We analyze the obstructions for having cat$\left(\mdkn\right)=\text{secat}(e_1)\leq m+i-1$, where $i$ is a positive integer, or, equivalently, for having secat$(e_1(m+i-1))=0$, where as in Theorem~\ref{obstruccion}
\[
\bigast_{m+i} \left(\Omega \mdkn\right) \to J_{m+i-1} \left(P_0(\mdkn)\right) \overset{e_1(m+i-1)}{\longrightarrow} \mdkn
\]
stands for the $(m+i)$-fold fiberwise join-power of $e_1$ (so $\ell=m+i-1$ in Theorem~\ref{obstruccion}). Since $\Omega \mdkn$ is  $(a-2)$-connected, there are no obstructions for picking a section $\phi$ over the $(a-1)$-skeleton of $\mdkn$ (so $s=a-1$ in Theorem~\ref{obstruccion}). Therefore, if $\eta\in H^a\left(\mdkn;\pi_{a-1}\left(\Omega \mdkn\right)\right)$ stands for the primary obstruction for sectioning $e_1$, then the primary obstruction for sectioning $e_1(m+i-1)$ is the $(m+i)$-st cup-power
\begin{equation*}
\begin{split}
\eta^{m+i} \in H^{a(m+i)}\left(\mdkn ; \pi_{a(m+i)-1} \left(\bigast_{m+i} \left(\Omega \mdkn\right)\right)\right)\\=H^{a(m+i)}\left(\mdkn;\left(\pi_{a-1}\left(\Omega\mdkn\right)\right)^{\otimes(m+i)}\right).
\end{split}
\end{equation*}
In view of \eqref{hdim}, all potential obstructions for sectioning $e_1(m+i-1)$ lie in trivial groups when $a(m+i)>ma+(d-1)(m+b-1)$. For $i=1$, this of course yields a direct obstruction-theoretic argument for the inequality $\cat(\mdkn)\leq m$ in Corollary~\ref{TCshypothesis}. Yet, we need the cup-length arguments in the previous section in order to deal with the case where the primary obstruction $\eta^{m+i}$ does not lie in a trivial group. Actually, we next prove the triviality of the $(m+1)$-st cup-power of any element in $H^a\left(\mdkn;\pi_{a-1}\left(\Omega\mdkn \right)\right)$. 

\begin{lemma}\label{primaryobstructionpowervanishes}
Recall $m=\floor{\frac{n}{k}}$ and $a=dk-d-1$. Any element $$\eta \in H^a \left(\mdkn;\pi_{a-1}\left(\Omega \mdkn\right)\right)$$ has trivial $(m+1)$-st cup-power.
\begin{proof}
The Hurewicz theorem and the considerations in Section~\ref{secciondescripciondecohomologia} (see particularly Theorem~\ref{sum} and Remark~\ref{decomposition}) show that the coefficient group $\pi_{a-1}\left(\Omega\mdkn\right)$ is free abelian of rank $\binom{n}{k}$. So, in terms of the decomposition $H^a \left(\mdkn;\bigoplus_{\binom{n}{k}} \mathbb{Z}\right)=\bigoplus_{\binom{n}{k}} H^a\left(\mdkn;\mathbb{Z}\right)$, we write $\eta = \sum_{\binom{n}{k}} \eta_j$. The naturality of cup-product on coefficients yields $\eta^{m+1} = \left(\sum \eta_j\right)^{m+1} = \sum \eta_{j_1} \cdots \eta_{j_{m+1}}$ where each summand $\eta_{j_1}  \cdots  \eta_{j_{m+1}}$ stands for the image of the cup-product $\eta_{j_1}\cup \cdots \cup \eta_{j_{m+1}}\in H^{a(m+1)}\left(\mdkn;\mathbb{Z}\right)$ under the map induced on coefficients by
$$\xymatrix{\mathbb{Z}=\mathbb{Z}\otimes\cdots\otimes\mathbb{Z}\ar[rr]^{\iota_{j_1}\otimes \cdots \otimes \iota_{j_{m+1}}} && \left(\bigoplus_{\binom{n}{k}}\mathbb{Z}\right)_{\text{\normalsize $\;.$}}^{\otimes(m+1)}}$$
%\[
%\text{\raisebox{-0.5em}{$\mathbb{Z} = \mathbb{Z} \otimes \cdots \otimes \mathbb{Z}$}} \overset{\iota_{j_1}\otimes \cdots \otimes \iota_{j_{m+i}}}{\mbox{\tikz[baseline=2.5pt]{\draw[-latex](0,0)--(1.85,0)node{\rule[-0.5mm]{0pt}{0.5mm}};}}} \left(\text{\raisebox{0.5em}{$\bigoplus_{\binom{n}{k}} \mathbb{Z}$}}\right)^{\otimes (m+1)}.
%\]
Here $\iota_r:\mathbb{Z}\hookrightarrow \bigoplus_{\binom{n}{k}} \mathbb{Z}$ stands for the inclusion into the $r$-th summand. The triviality of $\eta^{m+1}$ then follows from that of each $\eta_{j_1} \cup \cdots \cup \eta_{j_{m+1}}$ which, in turn, follows from the case $s=1$ in Lemma~\ref{lengths}.
\end{proof}
\end{lemma}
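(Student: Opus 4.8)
The plan is to strip the coefficients down to $\mathbb{Z}$ and then invoke the cup-length computation of Lemma~\ref{lengths}. First I would pin down the coefficient group $G:=\pi_{a-1}(\Omega\mdkn)$. Since $\mdkn$ is $(a-1)$-connected (as recorded in the proof of Theorem~\ref{cat}), its loop space $\Omega\mdkn$ is $(a-2)$-connected, and the Hurewicz theorem gives $G\cong\pi_a(\mdkn)\cong H_a(\mdkn;\mathbb{Z})$. The $k$-forest description of Section~\ref{secciondescripciondecohomologia} forces any degree-$a$ basic $k$-forest to consist of a single square vertex with one attached round vertex (two square vertices already have degree $2a>a$, and a square vertex cannot be isolated), so the classes of minimal degree $a$ are indexed by the $k$-element subsets of $\nn$; combined with the torsion-freeness of $H_\ast(\mdkn;\mathbb{Z})$ this shows $G$ is free abelian of rank $\binom{n}{k}$. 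Hence $G\cong\bigoplus_{\binom nk}\mathbb{Z}$ and correspondingly $H^a(\mdkn;G)\cong\bigoplus_{\binom nk}H^a(\mdkn;\mathbb{Z})$.

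Next I would decompose $\eta=\sum_j\eta_j$ along this splitting, where $\eta_j$ is the push-forward under the $j$-th inclusion $\iota_j\colon\mathbb{Z}\hookrightarrow G$ of a class $\bar\eta_j\in H^a(\mdkn;\mathbb{Z})$. The cup-power $\eta^{m+1}$ is formed using the iterated-join map out of $G^{\otimes(m+1)}$; by the $(a-2)$-connectivity of $\Omega\mdkn$ this map is an isomorphism onto a free abelian group, so $\eta^{m+1}$ may be treated as a class with coefficients in $G^{\otimes(m+1)}\cong\bigoplus\mathbb{Z}$. Expanding multilinearly and using naturality of the cup product in the coefficients, $\eta^{m+1}$ becomes a finite sum of classes obtained by pushing the integral cup products $\bar\eta_{j_1}\cup\cdots\cup\bar\eta_{j_{m+1}}\in H^{a(m+1)}(\mdkn;\mathbb{Z})$ forward into the summand of $G^{\otimes(m+1)}$ indexed by $(j_1,\dots,j_{m+1})$.

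Finally, each $\bar\eta_{j_1}\cup\cdots\cup\bar\eta_{j_{m+1}}$ is a product of $m+1$ classes of positive degree $a=d(k-1)-1>0$, hence lies in $\widetilde{H}^\ast(\mdkn)$; since $\mathrm{cl}^{\mathbb{Z}}(\mdkn)=m$ by the case $s=1$ of Lemma~\ref{lengths}, every such product vanishes, and therefore $\eta^{m+1}=0$. I expect the only genuinely delicate point to be the coefficient bookkeeping in the middle step: one must check that the iterated-join pairing defining $\eta^{m+1}$ is compatible with the chosen direct-sum decomposition of $G$, so that the cross-terms are precisely the forward images of the ordinary integral cup products $\bar\eta_{j_1}\cup\cdots\cup\bar\eta_{j_{m+1}}$. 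Granting this, the conclusion is immediate from the value of the cup-length already established.
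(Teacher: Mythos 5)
Your proposal is correct and follows essentially the same route as the paper: identify $\pi_{a-1}(\Omega\mdkn)\cong\bigoplus_{\binom{n}{k}}\mathbb{Z}$ via Hurewicz and the $k$-forest basis in degree $a$, split $\eta=\sum_j\eta_j$ accordingly, expand $\eta^{m+1}$ multilinearly using naturality of the cup product in coefficients, and kill each integral term $\eta_{j_1}\cup\cdots\cup\eta_{j_{m+1}}$ by the cup-length computation $\mathrm{cl}^{\mathbb{Z}}(\mdkn)=m$ from the $s=1$ case of Lemma~\ref{lengths}. The only cosmetic difference is that you spell out the rank computation and the compatibility with the join pairing, which the paper delegates to Section~\ref{secciondescripciondecohomologia} and the discussion preceding the lemma.
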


Returning to the discussion prior to Lemma~\ref{primaryobstructionpowervanishes}, we next prove a strengthening of Theorem~\ref{cat}, from which Theorem~\ref{omnibus} follows as an immediate consequence.

\begin{theorem}\label{catimproved} For $s\geq 1$, \emph{TC}$_s\left(\mdkn\right)$  is bounded by
\[
s\floor{\dfrac{n}{k}} \leq \mathrm{TC}_s(\mdkn) \leq s\left(\floor{\dfrac{n}{k}}+\mfloor{\dfrac{\left(\floor{\frac{n}{k}}+b-1\right)(d-1)}{a}}\right),
\]
where $a=d(k-1)-1$ and $n=k\floor{\frac{n}{k}}+b$ with $0 \leq b < k$.
\end{theorem}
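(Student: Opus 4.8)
The plan is to prove the two inequalities separately, noting that the lower bound $s\floor{n/k}\leq\mathrm{TC}_s(\mdkn)$ is already established in Theorem~\ref{cat} (it is Lemma~\ref{lengths} combined with $\mathrm{zcl}_s\leq\mathrm{TC}_s$), so all the work goes into the upper bound. Observe first that $\mfloor{x}=\lceil x-1\rceil$ differs from the floor $\floor{x}$ appearing in Theorem~\ref{cat} only when $x=(m+b-1)(d-1)/a$ is an integer, in which case $\mfloor{x}=x-1$ is one less than $\floor{x}=x$; when $x$ is not an integer the two statements coincide and there is nothing new to prove. Thus the entire content of the theorem is: when $a\mid (m+b-1)(d-1)$, we may shave one unit off $\cat$ (hence $s$ units off $\mathrm{TC}_s$ via $\mathrm{TC}_s\leq s\cdot\cat$, as in the proof of Theorem~\ref{cat}).

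First I would reduce to the statement $\cat(\mdkn)\leq m+\frac{(m+b-1)(d-1)}{a}-1$ under the divisibility hypothesis, and handle the passage to $\mathrm{TC}_s$ exactly as in Theorem~\ref{cat} via $\mathrm{TC}_s\leq s\cdot\cat$. Write $q:=\frac{(m+b-1)(d-1)}{a}$, an integer, so the target is $\cat(\mdkn)\leq m+q-1$, equivalently $\mathrm{secat}(e_1(m+q-1))=0$ for the based path fibration \eqref{bpfib}. Following the obstruction-theoretic discussion preceding Lemma~\ref{primaryobstructionpowervanishes}: since $\Omega\mdkn$ is $(a-2)$-connected, a section over the $(a-1)$-skeleton exists with no choices, and the primary (and only potential) obstruction to sectioning $e_1(\ell)$ with $\ell=m+q-1$ is the $(m+q)$-st cup-power $\eta^{m+q}\in H^{a(m+q)}(\mdkn;\pi_{a-1}(\Omega\mdkn)^{\otimes(m+q)})$. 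By \eqref{hdim}, $\mathrm{hdim}(\mdkn)=ma+(d-1)(m+b-1)=ma+aq=a(m+q)$, so $H^j(\mdkn;-)=0$ for $j>a(m+q)$ and all higher obstructions automatically vanish; the one obstruction living in top degree $a(m+q)$ is $\eta^{m+q}$ itself, and I must show it vanishes.

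The key step is showing $\eta^{m+q}=0$ for every $\eta\in H^a(\mdkn;\pi_{a-1}(\Omega\mdkn))$. As in Lemma~\ref{primaryobstructionpowervanishes}, the coefficient group is free abelian of rank $\binom{n}{k}$, so by naturality of the cup product on coefficients it suffices to show that every $(m+q)$-fold cup product $\eta_{j_1}\cup\cdots\cup\eta_{j_{m+q}}$ of classes in $H^a(\mdkn;\mathbb{Z})$ vanishes. Each $H^a(\mdkn;\mathbb{Z})$ class is a combination of elementary minimal-dimension $k$-forests (a single square vertex with one round vertex attached, per Remark~\ref{decomposition}), so expanding multilinearly we reduce to showing that a product of $m+q$ such elementary $k$-forests is zero. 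But $m+q>m=\floor{n/k}$ since $q\geq 1$ (here $q\geq 1$ because the case $m+b-1=0$ would force $b=1,m=0$, excluded by $k<n$; and $d-1\geq 1$, $a\geq 1$), and by the cup-length computation in Lemma~\ref{lengths}---more precisely the $\mathbb{Z}$-coefficient argument there showing there are no $k$-forests with $m+1$ square vertices---any product of $m+1$ (a fortiori $m+q$) elementary $k$-forests vanishes. Hence $\eta^{m+q}=0$, completing the argument; note this is a mild generalization of Lemma~\ref{primaryobstructionpowervanishes}, which did the case $q=1$.

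The main obstacle, and the only genuinely delicate point, is the bookkeeping around the degree count: one must verify that $a(m+q)$ is \emph{exactly} $\mathrm{hdim}(\mdkn)$ (not merely $\geq$), so that $\eta^{m+q}$ is the unique obstruction and there is no residual obstruction in degree $a(m+q)$ coming from $\pi$'s of higher join powers beyond the primary one---this is handled by invoking that $\eta^{m+q}$ is the \emph{primary} obstruction under the connectivity hypothesis (no indeterminacy, as noted after Theorem~\ref{obstruccion}), together with the vanishing of $H^{>\mathrm{hdim}}$. A secondary point worth stating carefully is why $q\geq 1$ so that $m+q\geq m+1$ and Lemma~\ref{lengths}'s bound applies; this follows from $d\geq 2$, $k\geq 3$, $n>k$ forcing $(m+b-1)(d-1)\geq a>0$ is not automatic, but under the divisibility hypothesis $q$ is a positive integer precisely when $(m+b-1)(d-1)>0$, which holds unless $m=b-1=0$; and $b\leq k-1$ with $m\geq 1$ rules this out.
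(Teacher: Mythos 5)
Your proposal is correct and follows essentially the same route as the paper: reduce to the case $a\mid(m+b-1)(d-1)$, prove $\cat(\mdkn)\leq m+q-1$ by showing the primary obstruction $\eta^{m+q}$ vanishes via the coefficient-splitting plus cup-length argument of Lemma~\ref{primaryobstructionpowervanishes} (which, as you note, extends verbatim from $m+1$ to $m+q$), kill the higher obstructions by the degree count $a(m+q)=\mathrm{hdim}(\mdkn)$, and pass to $\TC_s$ via $\TC_s\leq s\cdot\cat$. The only slip is in your final edge-case check: $m+b-1=0$ would force $m=1$, $b=0$ (i.e.\ $n=k$), not $b=1$, $m=0$, but either way it is excluded by the standing hypothesis $n>k$, so $q\geq 1$ as needed.
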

\begin{proof} 
We only need to focus on the cases not covered by Theorem~\ref{cat}, i.e., those satisfying
\begin{equation}\label{hip0tesis}
ai=(d-1)(m+b-1)
\end{equation}
for some positive integer $i$. Further, in such a case, the well known estimate $\TC_s\leq s\cdot\cat$ implies that it suffices to prove
\begin{equation}\label{baste}
\cat(\mdkn)\leq m+i-1.
\end{equation}
But Lemma~\ref{primaryobstructionpowervanishes} (and the discussion preceding it) give the vanishing of the primary obstruction for~(\ref{baste}), i.e., for sectioning $e_1(m+i-1)$, whereas the rest of the higher obstructions lie in trivial groups ---by~(\ref{hdim}) and~(\ref{hip0tesis}). 
\end{proof}

For $k$ fixed the function $f_k(d)=\dfrac{dk-2}{d-1}$ is decreasing, so that Theorem~\ref{omnibus} applies for more values of $n$ when $d=2$. The following assertion identifies the first complete interval of values of $n$ where Theorem~\ref{omnibus} holds for $d=2$. Tables~\ref{d2}, \ref{d5} and \ref{d10} illustrate the broader scope of Theorem~\ref{omnibus}.

\begin{corollary} If $k\neq n\leq k^2+k-2$ and $s\geq 1$, then $\TC_s(M_2^{(k)}(n))=s\floor{\frac{n}{k}}$.
\end{corollary}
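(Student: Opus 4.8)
The plan is to derive the corollary directly from Theorem~\ref{omnibus} by verifying that its arithmetic hypothesis holds under the stated bound on $n$ when $d=2$. With $d=2$ the condition $n-(k-1)\floor{\frac{n}{k}}\leq\frac{dk-2}{d-1}$ becomes precisely $n-(k-1)\floor{\frac{n}{k}}\leq 2k-2$, which is inequality~(\ref{nuestracondicion}). Writing $n=km+b$ with $0\le b<k$ and $m=\floor{\frac{n}{k}}$, the left-hand side is $m+b$, so the claim reduces to showing that $k\ne n\le k^2+k-2$ forces $m+b\le 2k-2$.

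First I would treat the easy range: if $n<2k$, then $m\le 1$, and since $n\ne k$ (and $n>k$ by our standing assumption $k<n$) we have $m=1$ and $b=n-k\le k-2$, whence $m+b\le k-1\le 2k-2$ as $k\ge 3$. Next, for $2k\le n\le k^2+k-2$ I would argue that $m=\floor{\frac{n}{k}}\le\floor{\frac{k^2+k-2}{k}}=k$ (using $k^2+k-2=k(k+1)-2$, so the floor is $k$ since $-2<0$ but $k-2\ge 0$ keeps the quotient at $k$; more precisely $k^2+k-2=k\cdot k+(k-2)$ with $0\le k-2<k$, so in fact $m\le k$ with $b\le k-2$ whenever $m=k$). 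For $m\le k-1$ one has $m+b\le (k-1)+(k-1)=2k-2$ trivially. The only boundary case is $m=k$, which by the division $k^2+k-2=k\cdot k+(k-2)$ forces $b\le k-2$, again giving $m+b\le 2k-2$.

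Thus in every case the hypothesis of Theorem~\ref{omnibus} is satisfied, and the conclusion $\TC_s(M_2^{(k)}(n))=s\floor{\frac{n}{k}}$ follows for all $s\ge 1$. The main (and only mild) obstacle is bookkeeping the edge case $m=k$: one must check that the upper bound $k^2+k-2$ has been chosen exactly so that when the quotient reaches $k$, the remainder is forced down to $k-2$, keeping $m+b$ at the threshold $2k-2$ rather than exceeding it. This is why the bound is $k^2+k-2$ and not, say, $k^2+2k-1$, which would make $m=k$ and $b$ as large as $k-1$, violating the hypothesis. No obstruction theory is needed here beyond what Theorem~\ref{omnibus} already packages; the corollary is purely a matter of identifying a clean, contiguous interval of $n$ on which that theorem applies.
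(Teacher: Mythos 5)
Your proposal is correct and matches the paper's (implicit) argument: the corollary is stated as an immediate consequence of Theorem~\ref{omnibus}, and your case analysis verifying $m+b\le 2k-2$ for $k<n\le k^2+k-2$ (with the boundary case $m=k$ forcing $b\le k-2$) is exactly the intended verification. One tiny slip: when $k<n<2k$ you can have $b=n-k=k-1$ (e.g.\ $n=2k-1$), not $b\le k-2$, but then $m+b=k\le 2k-2$ still holds, so the conclusion is unaffected.
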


Tables \ref{d2}, \ref{d5}, and \ref{d10} show the values of cat$(\mdkn)$ in cases determined by Theorem~\ref{omnibus}, for $d=2$, $d=5$ and $d=10$, respectively. Values of TC and TC$_s$ can then be read off by multiplying by $2$ or $s$, respectively. For example TC$(M_2^{(8)}(40))=10$ and TC$_s(M_2^{(8)}(40))=5s$. Shading tones in these tables indicate cases with a common value of $\floor{\frac{n}{k}}$, while the actual tabulated numbers indicate the values of $\mathrm{cat}(\mdkn)$ coming from Theorem~\ref{omnibus}. Instances where the equality cat$(\mdkn)=\floor{\frac{n}{k}}$ is not established by Theorem~\ref{omnibus} are indicated with a question mark. Note from these tables that the value of $\text{cat}(\mdkn)$ is determined when $n$ is "close" to $k$ (top right region). However $\text{cat}(\mdkn)$ becomes indetermined as $n$ "moves away" from $k$ (bottom left region). The general structure of the tables is relatively simple: column $k$ is divided into blocks of size $k$ (except for the very first block, whose size is $k-1$) sharing a common value of $\floor{\frac{n}{k}}$. For the top blocks, the common value is the answer for cat$\big(\mdkn\big)$, except for lower blocks, which start having instances where the condition $\floor{\frac{n}{k}}+b\leq \frac{dk-2}{d-1}$ in Theorem~\ref{omnibus} fails.

In principle, the obstruction techniques used in this section for the base path evaluation map~(\ref{bpfib}) could be used directly with the fibrations defining the higher topological complexities $\TC_s$. It is interesting to remark that such a strategy does not seem to lead to any improved $\TC_s$ upper bounds for the manifolds $\mdkn$; instead, it suggests the possibility that the gap in Theorem~\ref{cat} would have to be resolved by improving the lower bound. For such a task, non-trivial Massey products holding in non-formal spaces $\mdkn$ might be a way to formalize the suggested phenomenon.

%\clearpage

\begin{table}
\centering
\caption{Lusternik-Schnirelmann category values for $M_2^{(k)}(n)$}
\label{d2}
\includegraphics[width=\linewidth]{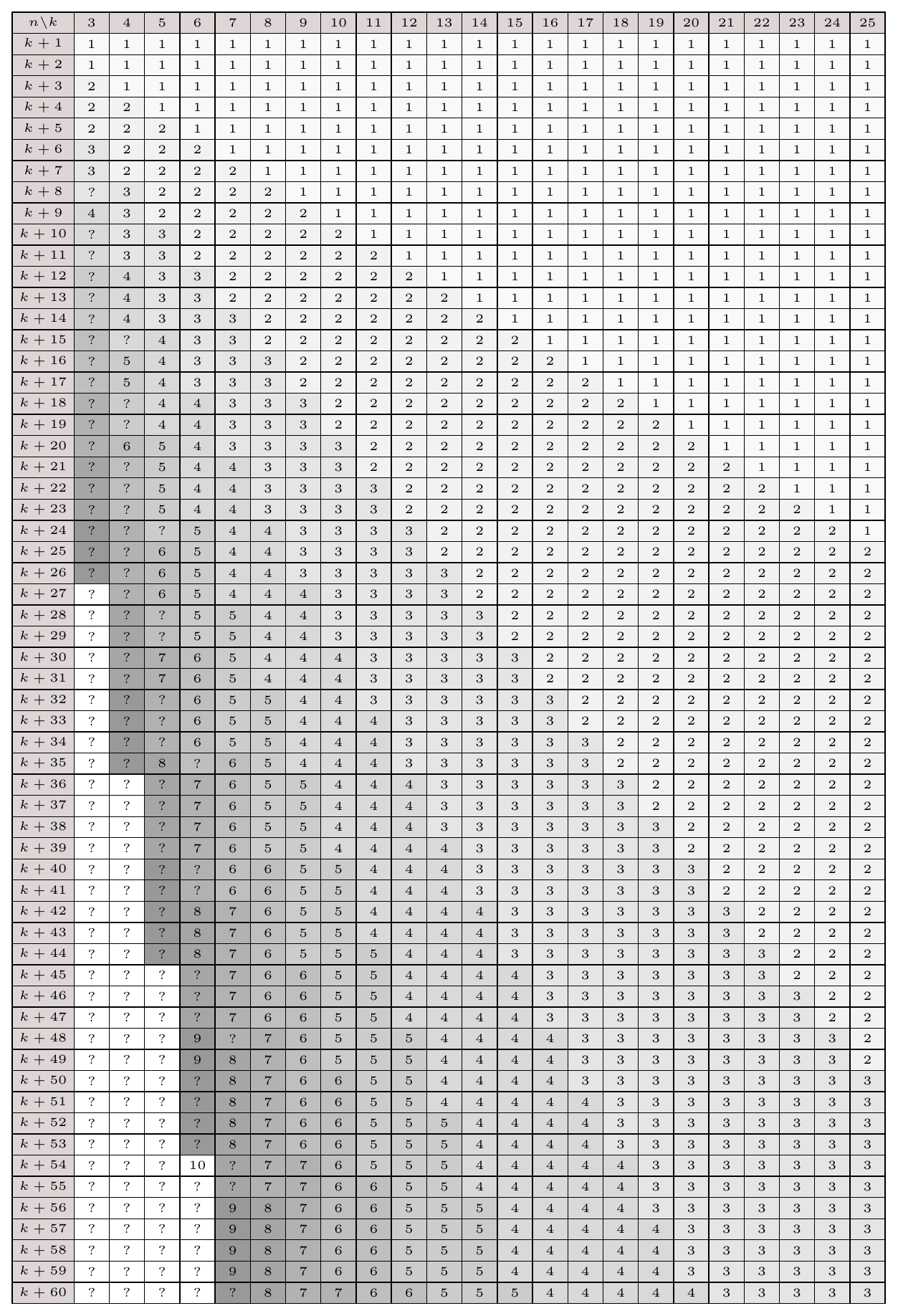}
\end{table}

\begin{table}
\centering
\caption{Lusternik-Schnirelmann category values for $M_5^{(k)}(n)$}
\label{d5}
\includegraphics[width=\linewidth]{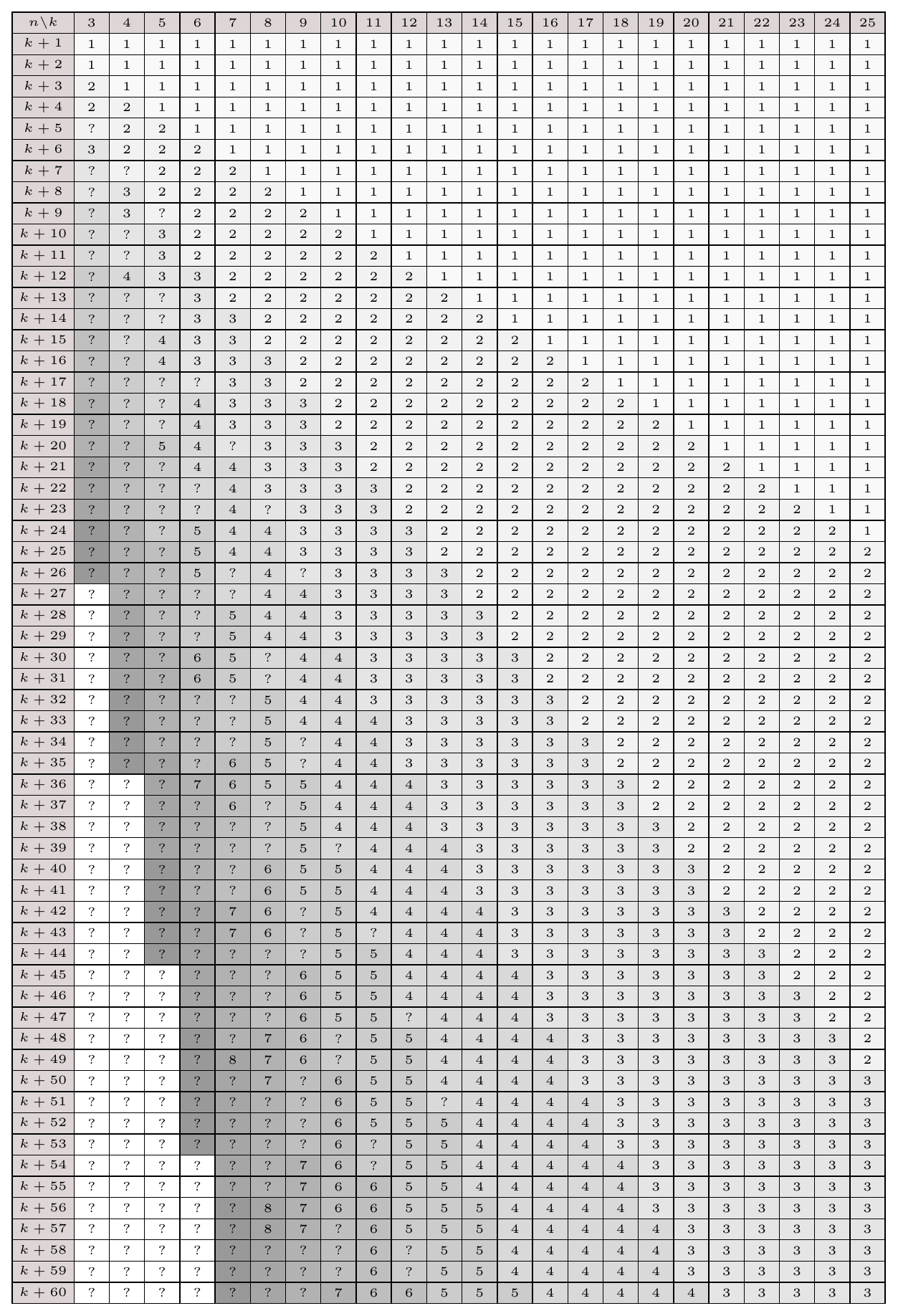}
\end{table}

\begin{table}
\centering
\caption{Lusternik-Schnirelmann category values for $M_{10}^{(k)}(n)$}
\label{d10}
\includegraphics[width=\linewidth]{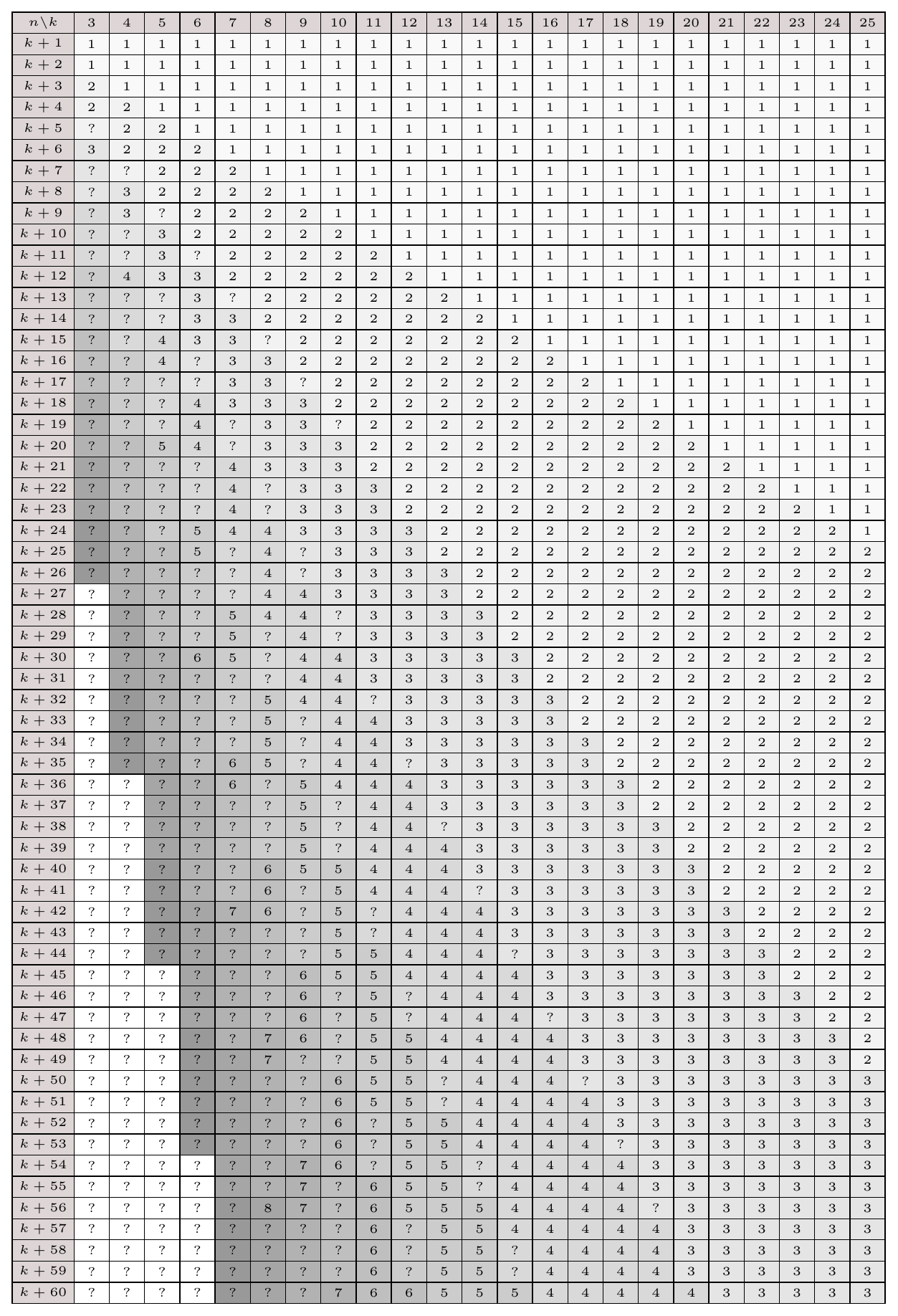}
\end{table}
\clearpage

\medskip
{\small \sc Departamento de Matem\'aticas

Centro de Investigaci\'on y de Estudios Avanzados del I.P.N.

Av.~Instituto Polit\'ecnico Nacional n\'umero 2508

San Pedro Zacatenco, M\'exico City 07000, M\'exico

{\tt jesus@math.cinvestav.mx}

{\tt  jose.leon@cinvestav.mx}}

\end{document}